\documentclass[11pt, a4paper, fleqn, final]{article}
\usepackage[utf8]{inputenc}
\usepackage[T1]{fontenc}
\usepackage[english]{babel}
\usepackage{amsmath, amsthm, amssymb, mathtools, dsfont}
\usepackage{csquotes}
\usepackage[backend=bibtex,sortcites=true,isbn=false,maxnames=5,mincrossrefs=1,parentracker=true,sorting=nyt]{biblatex}
\usepackage{libertine}

\usepackage{ifdraft}
\usepackage[draft]{showkeys}
\usepackage[obeyDraft]{todonotes}

\addbibresource{../bibliography/authors.bib}
\addbibresource{../bibliography/bib_math.bib}
\addbibresource{../bibliography/bib_own.bib}
\addbibresource{../bibliography/bib_other.bib}

\allowdisplaybreaks[4]
\sloppy

\newtheorem{theo}{Theorem}
\newtheorem{coro}[theo]{Corollary}
\newtheorem{prop}[theo]{Proposition}
\newtheorem{lemm}[theo]{Lemma}

\theoremstyle{definition}
\newtheorem{defi}[theo]{Definition}


\newcommand{\eqspace}{\ensuremath{\mathrel{\phantom{=}}}}
\newcommand{\vig}[2]{#1[#2]} 
\newcommand{\edel}{-} 
\newcommand{\vdel}{\hspace{-0.5pt} \ominus} 
\newcommand{\vsdel}{\hspace{-0.5pt} \ominus} 
\newcommand{\vhid}{{\sim}} 
\newcommand{\vshid}{{\sim}} 
\newcommand{\esub}{{\times}} 

\DeclarePairedDelimiter\abs{\lvert}{\rvert}


\newcommand{\sumsr}[2]{\smashoperator[r]{\sum\limits_{#1}} #2}

\makeatletter
\def\moverlay{\mathpalette\mov@rlay}
\def\mov@rlay#1#2{\leavevmode\vtop{%
   \baselineskip\z@skip \lineskiplimit-\maxdimen
   \ialign{\hfil$\m@th#1##$\hfil\cr#2\crcr}}}
\newcommand{\charfusion}[3][\mathord]{
    #1{\ifx#1\mathop\vphantom{#2}\fi
        \mathpalette\mov@rlay{#2\cr#3}
      }
    \ifx#1\mathop\expandafter\displaylimits\fi}
\makeatother
\newcommand{\cdotcup}{\charfusion[\mathbin]{\cup}{\cdot}}

\title{A survey on recurrence relations for the independence polynomial of hypergraphs\thanks{This works was supported by the National Natural Science Foundation of China.}}
\author{Martin Trinks \\[+0.8ex] 
Center for Combinatorics, 
Nankai University, 
Tianjin 300071, China \\[+0.8ex] 
\small \tt martin.trinks@googlemail.com}

\date{}

\begin{document}

\maketitle

\begin{abstract}
The independence polynomial of a hypergraph is the generating function for its independent (vertex) sets with respect to their cardinality. This article aims to discuss several recurrence relations for the independence polynomial using some vertex and edge operations. Further, an extension of the well-known recurrence relation for simple graphs to hypergraphs is proven and other novel recurrence relations are also discussed.
\\[+1.6ex]
\noindent\emph{Keywords:} independence polynomial, hypergraph, recurrence relation, independent sets, edge operation, vertex operation, graph, vertex cover polynomial
\\[+1.6ex]
\noindent \emph{Mathematics Subject Classification (2010):} 05C30, 05C31, 05C65, 05C69
\end{abstract}

\section{Introduction}
\label{sec:introduction}

A vertex subset of a simple graph $G$ is called an \emph{independent (vertex) set} if it does not include two adjacent vertices. The generating function of the number of independent sets of $G$ with respect to their cardinalities is known as the \emph{independence polynomial} $I(G, x)$ \cite{gutman1983, levit2005}. That is $I(G, x) = \sum_{i}{a_i(G) \: x^{i}}$, where $a_i(G)$ is the number of independent sets of $G$ with exaclty $i$ vertices.

The independence polynomial satisfies a recurrence relation \cite[Proposition~7]{gutman1983} with respect to the deletion of a vertex $v$, denoted by $\vdel v$, and the deletion of $v$ and its neighboring vertices, denoted by $\vsdel N[v]$. These operations give rise to the \emph{standard recurrence relation}:
\begin{align}
I(G, x) = I(G_{\vdel v}, x) + x \cdot I(G_{\vsdel N[v]}, x).
\end{align}

Contrary to the recurrence relations of some other well-known graph polynomials, such as the chromatic polynomial, the matching polynomial and the Potts model, whose recurrence relations established for simple graphs are also valid for hypergraphs, the recurrence relation of the independence polynomial of simple graphs does not extend to hypergraphs, in general. Thus, the purpose of this work is to demonstrate several recurrence relations for generating the independence polynomial of hypergraphs via both vertex and edge operations.

Further, we aim to contribute to the long-term goal of ``a general theory of graph polynomials'', a term due to \textcite{makowsky2008}. Thus, we give an in-depth study on the recurrence relations of independence polynomials, and thereby demonstrate how to use several different graph operations to arrive at different recurrence relations for the same polynomial.

The paper is organized as follows: After giving all necessary definitions and notation in Section \ref{sec:preliminaries}, we extend well-known recurrence relations of simple graphs to hypergraphs in Section \ref{sec:known_rec_rel}. Thereafter, in Section \ref{sec:new_rec_rel}, novel recurrence relations involving hypergraphs are introduced. In Section \ref{sec:vertex_cover_polynomial} a graph polynomial equivalent to the independence polynomial, the vertex cover polynomial, is discussed and its relation with the aforementioned recurrences are established. We complete this work with a short discussion in Section \ref{sec:open_problems}.
\section{Preliminaries}
\label{sec:preliminaries}

\begin{defi}
A \emph{hypergraph} $G = (V, E)$  is an ordered pair of a set of vertices,
the vertex set $V$, and a multiset of (hyper)edges, the edge set E, such that each edge is a non-empty subset of the vertex set, i.e.\ $e \subseteq V$ for all $e \in E$.
\end{defi}

Consequently, a hypergraph $G = (V, E)$ is a \emph{graph} if each edge is a set of at most two vertices, i.e.\ $\abs{e} \leq 2$ for all $e \in E$, and it is even more a \emph{simple graph} if each edge has exactly two vertices and the edge set is a set, i.e.\ $E \subseteq \binom{V}{2}$.

By $V(G)$, $n(G) = \abs{V(G)}$, $E(G)$ and $m(G) = \abs{E(G)}$ we denote the vertex set, number of vertices, edge set and number of edges of a hypergraph $G$, respectively. An edge is denoted as \emph{loop} if it has exactly one incident vertex.

\begin{defi}
Let $G = (V, E)$ be a hypergraph. A vertex subset $W \subseteq V$ is an \emph{independent set} in $G$ if $\forall e \in E \colon e \nsubseteq W$. \end{defi}

Actually, the definition of independent sets in simple graphs could also be  generalized to hypergraphs by requiring that from each edge at most one vertex is in an independent set. (Instead of requiring that not all vertices of any edge are in an independent set.) But this would be equivalently to replacing a (hyper)edge by edges between every two of its vertices. An analogous situation occured for the chromatic polynomial \cite[Footnote 2 on page 281]{stanley1998}. With the chosen definition, several relations between independent sets and proper colorings remain, for instance that a color class of a proper coloring is an independent set.

\begin{defi}
Let $G = (V, E)$ be a hypergraph. The \emph{independence polynomial} $I(G, x)$ is defined as
\begin{align*}
I(G, x) = \sumsr{\substack{W \subseteq V \\ W \text{ is independent in }G}}{x^{\abs{W}}}.
\end{align*}
\end{defi}

The first formal definition of the independence polynomial appears to be due to \textcite{gutman1983}. In fact, such a polynomial was earlier observed in statistical mechanics, see \cite{heilmann1972} and the references therein. The independence polynomial is well-studied, see the surveys \cite{hoede1994, levit2005}. In the literature, it is also known as \emph{independent set polynomial} \cite{hoede1994} and \emph{stable set polynomial} \cite{stanley1998}.

In an edgeless graph on $n$ vertices, denoted by $E_n$, every vertex subset is an independent set, hence $I(E_n, x) = (1 + x)^n$. From this initial value and by applying the standard recurrence relation, the independence polynomial of an arbitrary simple graph can be calculated. This procedure is often shortened by employing the multiplicativity in components, that is
\begin{align}
I(G_1 \cdotcup G_2, x) = I(G_1, x) \cdot I(G_2, x),
\end{align}
where $G_1 \cdotcup G_2$ is the disjoint union of the graphs $G_1$ and $G_2$.

For the sake of convenience we make use of a \emph{restricted independence polynomial} $I_S(G, x)$, where $S$ is a statement restricting the independent sets enumerated. (We omit $W$ in the statements.) For example, the independence polynomial counting all the independent sets of a graph $G$ including some subset $U$ and not including the vertex $v$ are, respectively, 
\begin{alignat*}{2}
& I_{U \subseteq}(G, x) &&= \sumsr{\substack{U \subseteq W \subseteq V \\ W \text{ is independent in }G}}{x^{\abs{W}}}, \qquad 
I_{v \notin }(G, x) = \sumsr{\substack{v \notin W \subseteq V \\ W \text{ is independent in }G}}{x^{\abs{W}}}.
\end{alignat*}

For a hypergraph $G=(V, E)$ with a vertex subset $W \subseteq V$, $N_G(W)$ denotes the \emph{open neighborhood} of $W$, which is the set of vertices adjacent to some vertex of $W$ in $G$: $N_G(W) = \{v \mid \{v, w\} \subseteq e \in E, v \in V, w \in W\}$. The \emph{closed neighborhood} $N_G[W]$ of $W$ is the union of the open neighborhood and the set $W$ itself, $N_G[W] = N_G(W) \cup W$. If $W = \{v\}$, we write $N_G(v)$ and $N_G[v]$ instead of $N_G(\{v\})$ and $N_G[\{v\}]$, respectively.

Given a hypergraph $G = (V, E)$, a vertex subset $W \subseteq V$ and an edge $e \in E$, we define the following graph operations:
\begin{itemize}
\item $\vsdel W$: \emph{deletion} of the vertices $v \in W$, i.e.\ the vertices $v$ and their incident edges are removed,
\item $\vshid W$: \emph{hiding} of the vertices $v \in W$, i.e.\ the vertices $v$ are removed in the vertex set and in their incident edges,
\item $-e$: \emph{deletion} of edge $e$, i.e.\ edge $e$ is removed,
\item $/e$: \emph{contraction} of edge $e$, i.e.\ edge $e$ is removed and its incident vertices are unified,
\item $\esub e$: \emph{subdivision} of edge $e$, i.e.\ edge $e$ is removed, and a vertex $d$ and edges $\{v, d\}$ for every vertex $v \in e$ are added.
\end{itemize}

We denote the graphs arising from these graph operations by the original graph with the graph operation added in the subscript, for example $G_{\vsdel W}$, $G_{\vshid W}$, $G_{-e}$, $G_{/e}$, $G_{\esub e}$. Again, for the sake of brevity, we write $G_{-v}$ and $G_{\vhid v}$ if $W = \{v\}$.

Let $G$ and $H$ be two hypergraphs. $H$ is a \emph{subgraph} of $G$, if $V(H) \subseteq V(G)$ and $E(H) \subseteq E(G)$. The (\emph{vertex}-)\emph{induced subgraph} $G[W]$ for some vertex subset $W \subseteq V(G)$ is the hypergraph with vertex set $W$ and all edges of $E(G)$ that are a subset of $W$, i.e.\ $\vig{G}{H} = (W, \{e \in E(G) \mid e \subseteq W \})$. In other words, a subgraph arises by vertex and edge deletion and an induced subgraph arises by vertex deletion only.
\section{Modification of known recurrence relations}
\label{sec:known_rec_rel}

There are already several recurrence relations known for the independence polynomial. The most widely used is the standard recurrence relation given above.

\begin{prop}[Proposition 7 in \cite{gutman1983}] \label{prop:rec_rel_sg_v}
Let $G = (V, E)$ be a simple graph and $v \in V$. The independence polynomial satisfies 
\begin{align}
I(G, x) = I(G_{\vdel v}) + x \cdot I(G_{\vsdel N_G[v]}).
\end{align}
\end{prop}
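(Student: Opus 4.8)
The plan is to partition the independent sets of $G$ according to whether they contain the chosen vertex $v$, and to identify each of the two resulting families with the independent sets of a smaller graph. Concretely, for a simple graph $G=(V,E)$ and $v\in V$, every independent set $W\subseteq V$ either satisfies $v\notin W$ or $v\in W$, and these two cases are mutually exclusive and exhaustive. Hence $I(G,x)=I_{v\notin}(G,x)+I_{v\in}(G,x)$, and the task reduces to showing $I_{v\notin}(G,x)=I(G_{\vdel v},x)$ and $I_{v\in}(G,x)=x\cdot I(G_{\vsdel N_G[v]},x)$.

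For the first identity I would argue that a set $W$ with $v\notin W$ is independent in $G$ if and only if it is independent in $G_{\vdel v}$, and $W\subseteq V\setminus\{v\}=V(G_{\vdel v})$. One direction is immediate since $G_{\vdel v}$ is a subgraph of $G$ (deleting $v$ only removes edges incident to $v$), so any edge of $G_{\vdel v}$ is an edge of $G$; the other direction uses that any edge of $G$ not fully contained in $V\setminus\{v\}$ must be incident to $v$ and hence is removed in $G_{\vdel v}$, so the only edges that could obstruct independence of $W$ in $G$ are precisely those surviving in $G_{\vdel v}$. Since the cardinalities agree, the generating functions match term by term.

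For the second identity, suppose $v\in W$ and $W$ is independent in $G$. Then $W$ contains no neighbor of $v$ (otherwise the edge $\{v,w\}$ would lie in $W$), so $W\cap N_G[v]=\{v\}$, i.e. $W\setminus\{v\}\subseteq V\setminus N_G[v]=V(G_{\vsdel N_G[v]})$. Conversely, for any independent set $W'$ of $G_{\vsdel N_G[v]}$, the set $W'\cup\{v\}$ is independent in $G$: since $G$ is a simple graph, every edge has at most two vertices, so an edge contained in $W'\cup\{v\}$ is either contained in $W'$ (impossible, as $W'$ is independent in the induced subgraph $G_{\vsdel N_G[v]}$ and this edge survives there) or equals $\{v,w\}$ for some $w\in W'$ (impossible, as $w\in N_G(v)$ would contradict $w\in V(G_{\vsdel N_G[v]})$). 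This gives a bijection $W'\mapsto W'\cup\{v\}$ between independent sets of $G_{\vsdel N_G[v]}$ and independent sets of $G$ containing $v$, which increases cardinality by exactly one; summing $x^{\abs{W'\cup\{v\}}}=x\cdot x^{\abs{W'}}$ yields $I_{v\in}(G,x)=x\cdot I(G_{\vsdel N_G[v]},x)$.

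The argument is essentially bookkeeping, so there is no serious obstacle; the only point requiring care is the step where we re-add $v$ to an independent set of $G_{\vsdel N_G[v]}$, which is exactly where the simple-graph hypothesis $\abs{e}\le 2$ is used — for a general hypergraph a hyperedge $e\ni v$ with $e\setminus\{v\}\subseteq W'$ and $e\setminus\{v\}\neq\emptyset$ need not be destroyed by deleting $N_G[v]$, since the vertices of $e\setminus\{v\}$ need not be neighbors of $v$ under the chosen (subset-based) notion of independence. This is precisely the reason the standard recurrence fails to extend verbatim to hypergraphs, motivating the remainder of the paper.
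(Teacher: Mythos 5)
Your proposal is correct and is exactly the argument the paper sketches after the proposition: partition the independent sets by whether they contain $v$, match the first class with the independent sets of $G_{\vdel v}$ and the second with those of $G_{\vsdel N_G[v]}$ shifted by a factor of $x$. Your closing remark on where $\abs{e}\le 2$ is used also matches the paper's later explanation of why the recurrence fails for hypergraphs.
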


In this recurrence relation, the first term counts exactly those independent sets of $G$ not including the vertex $v$, whereas the second term counts exactly those independent sets including the vertex $v$ (and therefore none of its neighbors). This recurrence relation can be generalized to any vertex subset in two different ways as stated in the next two theorems.

\begin{theo}[Theorem 3.7 in \cite{hoede1994}] \label{theo:rec_rel_sg_vs_1}
Let $G = (V, E)$ be a simple graph and $U \subseteq V$ a vertex subset. Then
\begin{align}
& I(G, x) = I(G_{\vsdel U}, x) + \sumsr{\substack{\emptyset \subset W \subseteq U \\ W \text{ is independent in }G}}{(-1)^{\abs{W}+1} \: x^{\abs{W}} \cdot I(G_{\vsdel N_{G}[W]}, x)}. 
\end{align}
\end{theo}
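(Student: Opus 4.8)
The plan is to prove the identity by partitioning the independent sets of $G$ according to their intersection with $U$ and applying inclusion–exclusion. First I would observe that $I(G,x) = I_{W \cap U = \emptyset}(G,x) + I_{W \cap U \neq \emptyset}(G,x)$, where the first summand counts exactly the independent sets of $G$ avoiding $U$ entirely; since an independent set avoiding all vertices of $U$ is the same thing as an independent set of $G_{\vsdel U}$ (deleting the vertices of $U$ together with their incident edges cannot create or destroy any independent set contained in $V \setminus U$), this first summand equals $I(G_{\vsdel U}, x)$. So the whole content of the theorem is in rewriting $I_{W \cap U \neq \emptyset}(G,x)$ as the stated alternating sum.

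Next I would handle the ``nonempty intersection'' part by inclusion–exclusion over which vertices of $U$ the independent set $W$ contains. For a fixed nonempty independent subset $T \subseteq U$, the independent sets $W$ of $G$ with $T \subseteq W$ are precisely the independent sets $W'$ of $G_{\vsdel N_G[T]}$ together with $T$ added back: indeed, $W \supseteq T$ is independent in $G$ iff $W \cap N_G(T) = \emptyset$ (so that no edge incident to $T$ is swallowed) and $W \setminus T$ is independent in the rest, which is exactly $G_{\vsdel N_G[T]}$; conversely if $T$ itself is not independent in $G$ there are no such $W$, and correspondingly $N_G[T]$ would not even be well-defined in the intended way — this is why the outer sum is restricted to independent $W \subseteq U$. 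Hence $I_{T \subseteq}(G,x) = x^{\abs{T}} \cdot I(G_{\vsdel N_G[T]}, x)$. Now by the standard inclusion–exclusion principle applied to the events ``$u \in W$'' for $u \in U$,
\begin{align*}
I_{W \cap U \neq \emptyset}(G,x) = \sumsr{\substack{\emptyset \subset T \subseteq U \\ T \text{ independent in } G}}{(-1)^{\abs{T}+1} \, I_{T \subseteq}(G,x)},
\end{align*}
and substituting the formula for $I_{T \subseteq}(G,x)$ yields the claimed expression (with $T$ renamed to $W$).

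To make the inclusion–exclusion step rigorous I would argue at the level of individual independent sets: fix an independent set $W$ of $G$ with $W \cap U \neq \emptyset$, set $A = W \cap U$, and check that its total contribution to the right-hand sum is $\sum_{\emptyset \subset T \subseteq A} (-1)^{\abs{T}+1} x^{\abs{W}} = x^{\abs{W}} \bigl(1 - \sum_{T \subseteq A}(-1)^{\abs{T}}\bigr) = x^{\abs{W}} \cdot (1 - 0) = x^{\abs{W}}$, using that $A$ is a nonempty independent subset of $U$ (so every $T \subseteq A$ is an admissible index of the sum, being independent as a subset of the independent set $W$) and that $W \supseteq T$ implies $W$ is counted by $I(G_{\vsdel N_G[T]},x)$ as analyzed above. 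Independent sets with $W \cap U = \emptyset$ contribute nothing to the sum and are accounted for by $I(G_{\vsdel U},x)$, so both sides count each independent set of $G$ with the correct weight $x^{\abs{W}}$.

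The main obstacle I expect is the bookkeeping around the restriction ``$W$ independent in $G$'' in the summation index and making sure it interacts correctly with the term $I(G_{\vsdel N_G[W]},x)$: one must check that for an independent $T \subseteq U$ the closed neighborhood $N_G[T]$ and the deletion $G_{\vsdel N_G[T]}$ behave as expected even when $T$ has several vertices, and that no double counting occurs among different $T$'s — this is precisely what the per-independent-set contribution computation above resolves. Once that is in place the identity follows by summing over all independent sets $W$ of $G$. Note that since $G$ is a simple graph here, $N_G[T] = \bigcup_{t \in T} N_G[t]$ and every subset of an independent set is independent, so these subtleties are mild; the same argument will require more care in the hypergraph setting treated later.
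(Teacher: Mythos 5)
Your proof is correct and follows essentially the same route the paper takes for the hypergraph analogue (Theorem~\ref{theo:rec_rel_hg_vs_1}): split off the independent sets avoiding $U$ as $I(G_{\vsdel U},x)$, apply inclusion--exclusion over the nonempty subsets of $U$ contained in an independent set, and use the identity $I_{T\subseteq}(G,x)=x^{\abs{T}}\cdot I(G_{\vsdel N_G[T]},x)$, which is the simple-graph specialization of Lemma~\ref{lemm:independent_sets_including}. Your per-independent-set verification that the signs telescope to $1$ for each $W$ with $W\cap U\neq\emptyset$ is a sound way to make the inclusion--exclusion rigorous, and your observation that non-independent $T$ contribute nothing correctly justifies restricting the summation index.
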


In this equality, the summation is over the vertex subsets of $U$ which are at least in the independent sets of $G$. Changing the summation to the vertex subsets exactly in the independent sets of $G$, the expression simplifies.

\begin{theo}[\footnote{In \cite{trinks2013} the present author mentioned a special case of this theorem (as Theorem 2) and misleadingly referred it to the reference of the Theorem \ref{theo:rec_rel_sg_vs_1} above.}] \label{theo:rec_rel_sg_vs}
Let $G = (V, E)$ be a simple graph and $U \subseteq V$ a vertex subset. Then
\begin{align}
& I(G, x) = \sumsr{\substack{W \subseteq U \\ W \text{ is independent in }G}}{x^{\abs{W}} \cdot I(G_{\vsdel U \vsdel N_{G}(W)}, x)}.
\end{align}
\end{theo}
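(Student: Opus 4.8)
The plan is to prove the identity by classifying the independent sets of $G$ according to their intersection with $U$. Every independent set $W'$ of $G$ determines a unique subset $W = W' \cap U$, and since $W' \supseteq W$ is independent, the set $W$ is itself independent in $G$; moreover $W \subseteq U$. So the independent sets of $G$ are partitioned into classes indexed by the independent subsets $W \subseteq U$, and we can write
\begin{align*}
I(G, x) = \sumsr{\substack{W \subseteq U \\ W \text{ is independent in }G}}{\; \sumsr{\substack{W \subseteq W' \subseteq V \\ W' \cap U = W \\ W' \text{ is independent in }G}}{x^{\abs{W'}}}}.
\end{align*}
The goal is then to show that the inner sum equals $x^{\abs{W}} \cdot I(G_{\vsdel U \vsdel N_G(W)}, x)$ for each fixed independent $W \subseteq U$.

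To establish this, fix an independent set $W \subseteq U$ and analyze which vertices $W'$ may contain in addition to $W$. First, $W'$ contains no further vertex of $U$, by the constraint $W' \cap U = W$. Second, $W'$ contains no vertex of $N_G(W)$: if $u \in N_G(W)$ were in $W'$, then $\{u, w\} \subseteq e$ for some edge $e$ and some $w \in W \subseteq W'$, contradicting independence (here I use that $G$ is a simple graph, so $e = \{u,w\}$ is itself the edge and lies inside $W'$). Hence $W' \setminus W \subseteq V \setminus (U \cup N_G(W)) = V(G_{\vsdel U \vsdel N_G(W)})$. Conversely, writing $W' = W \cdotcup W''$ with $W'' \subseteq V(G_{\vsdel U \vsdel N_G(W)})$, I need: $W'$ is independent in $G$ if and only if $W''$ is independent in $G_{\vsdel U \vsdel N_G(W)}$. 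The forward direction is immediate since independence is hereditary and deleting vertices only removes edges. For the converse, suppose $W''$ is independent in the deleted graph but some edge $e \in E(G)$ satisfies $e \subseteq W' = W \cup W''$; since $e$ has at most two vertices and $W$ is independent, $e$ is not contained in $W$, so $e$ contains a vertex of $W''$, and since $e \subseteq W \cup W''$ with $|e| \le 2$, either $e \subseteq W''$ (contradicting independence of $W''$ in $G$, hence in the subgraph, provided $e$ survives — and $e$ survives precisely because its vertices lie in $W'' \subseteq V \setminus (U \cup N_G(W))$), or $e = \{w, u''\}$ with $w \in W$, $u'' \in W''$, which forces $u'' \in N_G(W)$, contradicting $u'' \notin N_G(W)$. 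This bijection $W' \leftrightarrow W''$ together with $\abs{W'} = \abs{W} + \abs{W''}$ gives
\begin{align*}
\sumsr{\substack{W \subseteq W' \subseteq V \\ W' \cap U = W \\ W' \text{ independent in }G}}{x^{\abs{W'}}} = x^{\abs{W}} \sumsr{\substack{W'' \subseteq V(G_{\vsdel U \vsdel N_G(W)}) \\ W'' \text{ independent in }G_{\vsdel U \vsdel N_G(W)}}}{x^{\abs{W''}}} = x^{\abs{W}} \cdot I(G_{\vsdel U \vsdel N_G(W)}, x),
\end{align*}
and summing over all independent $W \subseteq U$ yields the claim.

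The only real subtlety — and the step I expect to need the most care — is the converse direction of the independence equivalence, i.e.\ verifying that no edge of $G$ reappears ``across'' the split $W \cdotcup W''$ once we pass to the deleted subgraph. This is exactly where the simple-graph hypothesis $\abs{e} \le 2$ is used: an edge contained in $W \cup W''$ but not in $W$ must have one endpoint in $W$ and one in $W''$, hence its $W''$-endpoint lies in $N_G(W)$ and has therefore been deleted — but then the edge itself is gone, so it cannot violate independence of $W''$ in $G_{\vsdel U \vsdel N_G(W)}$. For a genuine hypergraph this argument breaks down (an edge could meet $W$ and still have two or more vertices in $W''$), which is precisely why the recurrence does not extend verbatim and motivates the later sections. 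A secondary point worth a sentence is the boundary case $W = \emptyset$, which contributes the term $I(G_{\vsdel U}, x)$ and shows how this formulation relates to Theorem \ref{theo:rec_rel_sg_vs_1} after inclusion–exclusion.
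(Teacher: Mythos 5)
Your argument is correct, but it follows a genuinely different route from the paper's. The paper proves the identity by induction on $\abs{U}$, repeatedly applying the standard recurrence (Proposition \ref{prop:rec_rel_sg_v}) to one vertex $u$ of $U$ at a time and splitting the index set according to whether $u \in W'$; the main technical point there is the commutation $G_{\vsdel A \vsdel N_{G_{\vsdel A}}(B)} = G_{\vsdel A \vsdel N_G(B)}$. You instead partition all independent sets $W'$ of $G$ by their trace $W = W' \cap U$ and prove directly that, for fixed independent $W \subseteq U$, the map $W' \mapsto W' \setminus W$ is a size-shifting bijection onto the independent sets of $G_{\vsdel U \vsdel N_G(W)}$. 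Your version is self-contained (it does not invoke Proposition \ref{prop:rec_rel_sg_v} at all), avoids the neighborhood bookkeeping of the induction, and — as you note — pinpoints exactly where the hypothesis $\abs{e} \leq 2$ enters, namely in showing that an edge meeting both $W$ and its complement in $W'$ is destroyed by deleting $N_G(W)$; this is precisely the step that fails for hypergraphs and explains why Theorem \ref{theo:rec_rel_hg_vs} replaces $\vsdel N_G(W)$ by the hiding operation $\vshid W$ (your bijection is essentially the simple-graph instance of Lemma \ref{lemm:independent_sets_including}). The paper's induction, by contrast, is shorter once the standard recurrence is available and exhibits the theorem as an iterated application of it. Both proofs are complete; the minor points you flag (the case $W = \emptyset$, and that edges surviving vertex deletion are exactly those contained in the surviving vertex set) are handled correctly.
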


\begin{proof}
We prove the statement by induction on the number of vertices in $U$. As basic step we consider the case $|U| = 1$, say $U = \{u\}$. From the standard recurrence relation (Proposition \ref{prop:rec_rel_sg_v}) we get
\begin{align*}
I(G, x) &= I(G_{\vdel u}, x) + x \cdot I(G_{\vdel u \vsdel N_{G}(u)}, x), \\
&= \sumsr{\substack{W \subseteq U = \{u\} \\ W \text{ is independent in }G}}{I(G_{\vsdel U \vsdel N_{G}(W)}, x)}.
\end{align*}
As induction hypothesis we assume that the statement holds for all sets $U$ with $|U| \leq n$. Let $U' = U \cup \{u\}$ with $\abs{U} = n$. Applying the standard recurrence relation and the induction hypothesis we obtain
\begin{align*}
I(G, x) 
&= I(G_{\vdel u}, x) + I(G_{\vdel u \vsdel N_{G}(u)}) \\
&= \sum_{\mathrlap{\substack{W \subseteq U \\ W \text{ is independent in }G_{\vdel u}}}}{I(G_{\vdel u \vsdel U \vsdel N_{G_{\vdel u}}(W)}, x)} + \sum_{\mathrlap{\substack{W \subseteq U \setminus N_{G}(u) \\ W \text{ is independent in }G_{\vdel u \vsdel N_G(u)}}}}{I(G_{\vdel u \vsdel N_{G}(u) \vsdel U \vsdel N_{G_{\vdel u \vsdel N_G(u)}}(W)}, x)} \\
&= \sum_{\mathrlap{\substack{W \subseteq U \\ W \text{ is independent in }G_{\vdel u}}}}{I(G_{\vdel u \vsdel U \vsdel N_{G}(W)}, x)} + \sum_{\mathrlap{\substack{W \subseteq U \setminus N_{G}(u) \\ W \text{ is independent in }G_{\vdel u \vsdel N_G(u)}}}}{I(G_{\vdel u \vsdel N_{G}(u) \vsdel U \vsdel N_{G}(W)}, x)} \\
&= \sum_{\mathrlap{\substack{W' \subseteq U' \\ W' \text{ is independent in }G \\ u \notin W'}}}{I(G_{\vsdel U' \vsdel N_{G}(W')}, x)} + \sum_{\mathrlap{\substack{W' \subseteq U' \\ W' \text{ is independent in} G \\ u \in W'}}}{I(G_{\vsdel U' \vsdel N_{G}(W')}, x)} \\  
&= \sum_{\mathrlap{\substack{W' \subseteq U' \\ W' \text{ is independent in }G}}}{I(G_{\vsdel U' \vsdel N_{G}(W')}, x)},
\end{align*}
where the third equality holds because 
$G_{\vsdel A \vsdel N_{G_{\vsdel A}}(B)} = G_{\vsdel A \vsdel N_G(B)}$.
Thus, the statement also holds when $|U'| = n+1$, which proves the theorem.
\end{proof}

In the special case were $U$ is a clique, the index of the summation reduces to the vertices of $U$.

\begin{coro}[Corollary 3.8 in \cite{hoede1994}]
Let $G = (V, E)$ be a simple graph and $U \subseteq V$ a vertex subset that induces a complete subgraph. Then
\begin{align}
I(G, x) = I(G_{\vsdel U}, x) + x \cdot \sumsr{u \in U}{I(G_{\vsdel N[v]}, x)}.
\end{align}
\end{coro}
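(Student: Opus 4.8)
The plan is to derive the corollary directly from Theorem~\ref{theo:rec_rel_sg_vs} by specializing its summation to the case in which $U$ induces a complete subgraph. The first step is to determine which vertex subsets $W \subseteq U$ are independent in $G$: since any two distinct vertices of $U$ are adjacent, no $W \subseteq U$ with $\abs{W} \geq 2$ can be independent, so the only independent subsets of $U$ are $\emptyset$ and the singletons $\{u\}$ with $u \in U$. Hence the sum in Theorem~\ref{theo:rec_rel_sg_vs} reduces to exactly $\abs{U} + 1$ terms.

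Next I would evaluate these terms. The term for $W = \emptyset$ is $x^{0} \cdot I(G_{\vsdel U \vsdel N_G(\emptyset)}, x) = I(G_{\vsdel U}, x)$, using $N_G(\emptyset) = \emptyset$. The term for $W = \{u\}$ is $x \cdot I(G_{\vsdel U \vsdel N_G(u)}, x)$, and here I would invoke the clique hypothesis a second time: because $u \in U$ and $U \setminus \{u\} \subseteq N_G(u)$, we have $U \cup N_G(u) = \{u\} \cup N_G(u) = N_G[u]$; since deleting a set of vertices does not depend on the order in which the vertices are removed and is unaffected by repetitions, this gives $G_{\vsdel U \vsdel N_G(u)} = G_{\vsdel N_G[u]}$. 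Summing all contributions then produces precisely $I(G_{\vsdel U}, x) + x \cdot \sum_{u \in U} I(G_{\vsdel N_G[u]}, x)$, which is the asserted identity.

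I do not expect a genuine obstacle here. The only points that need care are the two appeals to the completeness of $G[U]$ — first to cut the index set of the sum down to $\emptyset$ and the singletons, and then to identify $U \cup N_G(u)$ with $N_G[u]$ for $u \in U$ — together with the routine observation that vertex deletion is order-independent and idempotent, so that $G_{\vsdel U \vsdel N_G(u)}$ and $G_{\vsdel (U \cup N_G(u))}$ denote the same graph. An alternative route would be a self-contained induction on $\abs{U}$ mimicking the proof of Theorem~\ref{theo:rec_rel_sg_vs}, but the specialization argument above is shorter and reuses what has already been established.
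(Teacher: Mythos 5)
Your proposal is correct and follows exactly the route the paper intends: the corollary is stated as the specialization of Theorem~\ref{theo:rec_rel_sg_vs} to a clique $U$, where the only independent subsets of $U$ are $\emptyset$ and the singletons, and $G_{\vsdel U \vsdel N_G(u)} = G_{\vsdel N_G[u]}$ because $U \setminus \{u\} \subseteq N_G(u)$. Your write-up merely makes explicit the details the paper leaves to the reader (and, as you implicitly note, the $v$ in the displayed sum should read $u$).
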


As previously mentioned in the introduction, the standard recurrence relation as given in Proposition \ref{prop:rec_rel_sg_v} is not valid for hypergraphs. For example, observe a hypergraph with three vertices connected by the sole edge. In fact, the standard recurrence relation is not true for graphs with loops, even in the case of a single vertex with a loop.

The reason is that in such situation the second term of the sum, $I(G_{\vsdel N[v]}, x)$, does not count the number of independent sets including $v$ correctly. In the case of a loop incident to $v$, $I(G_{\vsdel N[v]}, x)$ counts too much: At least the empty set is counted as an independent set of $G_{\vsdel N[v]}$, which corresponds to the independent set $\{v\}$ of $G$, but there are no independent sets including $v$. On the other hand, if $v$ is a vertex of a (hyper)edge with more than two vertices, then $I(G_{\vsdel N[v]}, x)$ counts too little: By deleting the neighborhood of $v$, there are also some vertices deleted, that may be in an independent set together with $v$.

Instead of $G_{\vsdel N[v]}$, the hypergraph $G_{\vhid v}$ can be observed to count the number of independent vertex set including $v$. This approach also works for vertex subsets.

\begin{lemm} \label{lemm:independent_sets_including}
Let $G = (V, E)$ be a hypergraph with independent sets $U, \{v\} \subseteq V$. The restricted independence polynomial satisfies
\begin{alignat}{2}
& I_{U \subseteq }(G, x) 
&&= x^{\abs{U}} \cdot I(G_{\vshid U}, x), \quad \text{and particularly} \\
& I_{v \in}(G, x) 
&&= x \cdot I(G_{\vhid v}, x).
\end{alignat}
\end{lemm}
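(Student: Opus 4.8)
The plan is to exhibit, for fixed independent $U$, a bijection between the independent sets of $G$ that contain $U$ and all independent sets of $G_{\vshid U}$, namely $W \mapsto W \setminus U$ with inverse $W' \mapsto W' \cup U$, and to observe that it shifts cardinality by exactly $\abs{U}$.

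First I would check that $G_{\vshid U}$ is a genuine hypergraph. Since $U$ is independent in $G$, no edge $e \in E$ satisfies $e \subseteq U$, so $e \setminus U \neq \emptyset$ for every $e \in E$; hence hiding $U$ yields a hypergraph with vertex set $V \setminus U$ and edge multiset $\{\, e \setminus U \mid e \in E \,\}$. Next comes the core claim: for any $W$ with $U \subseteq W \subseteq V$, the set $W$ is independent in $G$ if and only if $W \setminus U$ is independent in $G_{\vshid U}$. For the forward direction, if some edge $e \setminus U$ of $G_{\vshid U}$ were contained in $W \setminus U$, then, using $e \cap U \subseteq U \subseteq W$, we would get $e = (e \setminus U) \cup (e \cap U) \subseteq W$, contradicting the independence of $W$ in $G$. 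Conversely, if some $e \in E$ satisfied $e \subseteq W$, then $e \setminus U \subseteq W \setminus U$ and $e \setminus U$ is a (non-empty) edge of $G_{\vshid U}$, contradicting the independence of $W \setminus U$ there.

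Since $W \mapsto W \setminus U$ is a bijection from $\{\, W \mid U \subseteq W \subseteq V \,\}$ onto the subsets of $V \setminus U$ with $\abs{W} = \abs{W \setminus U} + \abs{U}$, re-indexing the defining sum by $W' = W \setminus U$ gives
\[
I_{U \subseteq}(G, x)
= \sum_{\substack{W' \subseteq V \setminus U \\ W' \text{ is independent in } G_{\vshid U}}} x^{\abs{W'} + \abs{U}}
= x^{\abs{U}} \cdot I(G_{\vshid U}, x),
\]
and the second identity is the special case $U = \{v\}$. The only subtle point — and the place where the hypothesis that $U$ is independent is actually used — is the first step, ensuring that hiding $U$ does not create an empty edge and hence produces a valid hypergraph; the bijection and the cardinality bookkeeping are then routine.
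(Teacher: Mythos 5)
Your proposal is correct and follows essentially the same route as the paper: both establish that the independent sets of $G_{\vshid U}$, joined with $U$, are exactly the independent sets of $G$ containing $U$, using that the edges of $G_{\vshid U}$ are the edges of $G$ with the vertices of $U$ removed. Your version simply spells out both directions of the equivalence and the well-definedness of the hiding operation more explicitly than the paper does.
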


\begin{proof}
The statement equals the claim that (for an independent set $U$) the independent sets of $G_{\vshid U}$, together with $U$, are exactly the independent sets of $G$ including $U$. Independent sets of a graph are exactly those vertex subsets not including any edge of that graph. Therefore, in each independent set of $G$ including $U$ at least one vertex (not in $U$) of each edge of $G$ is missing. But the same holds for the independent set of $G_{\vshid U}$ joined with $U$, because the edges of $G_{\vshid U}$ are subsets of the edges of $G$ (missing the vertices from $U$).
\end{proof}

Indeed, if there is an edge which is a subset of $W$, then $G_{\vshid W}$ is not defined, as we would end up with an empty edge. We give a further comment about this observation in Section \ref{sec:open_problems}.

\begin{theo} \label{theo:rec_rel_hg_v}
Let $G = (V, E)$ be a hypergraph and $v \in V$. The independence polynomial satisfies
\begin{align}
I(G, x) = 
\begin{cases}
I(G_{\vdel v}, x) + x \cdot I(G_{\vhid v}, x) & \text{if } \{v\} \notin E, \\
I(G_{\vdel v}, x) & \text{else.}
\end{cases}
\end{align}
\end{theo}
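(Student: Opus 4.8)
The plan is to partition the independent sets of $G$ according to whether or not they contain $v$, so that
\[
I(G, x) = I_{v \notin}(G, x) + I_{v \in}(G, x),
\]
and then to identify each of the two summands separately, treating the two cases $\{v\} \notin E$ and $\{v\} \in E$ where they differ.

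First I would show that $I_{v \notin}(G, x) = I(G_{\vdel v}, x)$, which holds for every hypergraph without any case distinction. The key observation is that the deletion $G_{\vdel v}$ removes precisely $v$ together with the edges incident to $v$, so $E(G_{\vdel v})$ consists of exactly those edges of $G$ not containing $v$. Hence, for a subset $W \subseteq V \setminus \{v\}$: $W$ contains no edge of $G$ if and only if it contains no edge of $G$ avoiding $v$ (it cannot contain an edge through $v$ anyway, since $v \notin W$), i.e.\ if and only if $W$ is independent in $G_{\vdel v}$. Thus the independent sets of $G$ not containing $v$ are exactly the independent sets of $G_{\vdel v}$, and the generating functions agree.

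Second I would analyse $I_{v \in}(G, x)$. If $\{v\} \notin E$, then, since every edge is nonempty, no edge is a subset of $\{v\}$, so $\{v\}$ is an independent set of $G$; consequently $G_{\vhid v}$ is well-defined (no edge equals $\{v\}$, so hiding $v$ produces no empty edge), and Lemma~\ref{lemm:independent_sets_including} applies with $U = \{v\}$ to give $I_{v \in}(G, x) = x \cdot I(G_{\vhid v}, x)$. If instead $\{v\} \in E$, then any $W$ with $v \in W$ satisfies $\{v\} \subseteq W$ with $\{v\} \in E$, so $W$ is not independent; hence there is no independent set containing $v$ and $I_{v \in}(G, x) = 0$. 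Combining the two contributions in each case yields exactly the stated formula.

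The argument is essentially bookkeeping, and I do not expect a serious obstacle. The one point needing care is the well-definedness of $G_{\vhid v}$: hiding $v$ turns an edge $\{v\}$ into the empty edge, which is not allowed, so the loop case genuinely must be separated off — but this is precisely the case distinction in the statement, and once it is made, the loop case is trivial and the non-loop case is handed to Lemma~\ref{lemm:independent_sets_including}.
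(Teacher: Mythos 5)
Your proposal is correct and follows essentially the same route as the paper's own proof: partition the independent sets by whether they contain $v$, identify the sets avoiding $v$ with those of $G_{\vdel v}$, handle the loop case by noting no independent set can contain $v$, and otherwise invoke Lemma~\ref{lemm:independent_sets_including} for the sets containing $v$. Your version merely spells out in more detail the deletion step and the well-definedness of $G_{\vhid v}$, both of which the paper leaves implicit.
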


\begin{proof}
The independent set of $G$ not including $v$ are the independent edges of $G_{\vdel v}$. If there is a loop incident to $v$, i.e.\ $\{v\} \in E$, then there is no independent set in $G$ including $v$. Thus, all independent sets of $G$ are counted by $I(G_{\vdel v}, x)$. Otherwise, if there are no loops incident to $v$, then by Lemma \ref{lemm:independent_sets_including} the independent sets of $G$ including $v$ are exactly those of $G_{\vhid v}$ joined with $\{v\}$.
\end{proof}

Indeed, this theorem is a generalization of the standard recurrence relation in the case of simple graphs: Applying $\vhid v$, loops arise for all neighbors of $v$. Consequently, these vertices can be deleted and we end up with the graph $G_{\vsdel N[v]}$.

Analogous to Theorem \ref{theo:rec_rel_sg_vs} for simple graphs, the theorem above can be generalized to vertex subsets.

\begin{theo} \label{theo:rec_rel_hg_vs}
Let $G = (V, E)$ be a hypergraph and $U \subseteq V$ a vertex subset. The independence polynomial satisfies
\begin{align}
I(G, x) = \sumsr{\substack{W \subseteq U \\ W \text{ is independent in }G}}{x^{\abs{W}} \cdot I(G_{\vshid W \vsdel U}, x)}.
\end{align}
\end{theo}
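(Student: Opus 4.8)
The cleanest route I see is a direct refinement of Lemma~\ref{lemm:independent_sets_including}; an induction on $\abs{U}$ based on Theorem~\ref{theo:rec_rel_hg_v}, in the spirit of the proof of Theorem~\ref{theo:rec_rel_sg_vs}, would also work, but it forces one to track nested hide/delete operations through the induction step, whereas the direct argument makes the structure transparent. The starting observation is that every independent set $S$ of $G$ singles out the vertex subset $W := S \cap U$, which is again independent in $G$ (it is a subset of $S$) and is contained in $U$. Sorting the independent sets of $G$ by this intersection therefore gives
\begin{align*}
I(G, x) = \sumsr{\substack{W \subseteq U \\ W \text{ is independent in }G}}{I_{\cap\, U \,=\, W}(G, x)},
\end{align*}
where $I_{\cap\, U \,=\, W}(G, x)$ denotes the restricted independence polynomial enumerating those independent sets $S$ of $G$ with $S \cap U = W$. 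Thus it suffices to prove, for each independent $W \subseteq U$, that $I_{\cap\, U \,=\, W}(G, x) = x^{\abs{W}} \cdot I(G_{\vshid W \vsdel U}, x)$.

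For this I would split the constraint ``$S \cap U = W$'' into its two halves, ``$W \subseteq S$'' and ``$S \cap (U \setminus W) = \emptyset$''. Since $W$ is independent, $G_{\vshid W}$ is well-defined, and Lemma~\ref{lemm:independent_sets_including} tells us that $S \mapsto S \setminus W$ is a bijection between the independent sets of $G$ containing $W$ and the independent sets of $G_{\vshid W}$; this bijection drops exactly $\abs{W}$ vertices, which accounts for the factor $x^{\abs{W}}$. Under it the remaining requirement ``$S \cap (U \setminus W) = \emptyset$'' translates into ``the image independent set of $G_{\vshid W}$ is disjoint from $U \setminus W$''. Now I invoke the elementary fact, already implicit in the identification $\vig{H}{V(H) \setminus A} = H_{\vsdel A}$ recorded in Section~\ref{sec:preliminaries}, that the independent sets of a hypergraph $H$ avoiding a vertex set $A$ are precisely the independent sets of $H_{\vsdel A}$. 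Applying it with $H = G_{\vshid W}$ and $A = U \setminus W$, and noting that in $G_{\vshid W}$ the vertices of $W$ have already been removed so that deleting $U$ and deleting $U \setminus W$ amount to the same thing, i.e.\ $(G_{\vshid W})_{\vsdel (U \setminus W)} = G_{\vshid W \vsdel U}$, we obtain exactly the claimed identity for $I_{\cap\, U \,=\, W}(G, x)$, and summing over $W$ finishes the proof.

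The content of the argument is really all in Lemma~\ref{lemm:independent_sets_including}, so I expect the only delicate points to be notational: one must verify that every hypergraph appearing is well-defined (no edge collapses to $\emptyset$, which is guaranteed because $W \subseteq U$ is independent, so $G_{\vshid W}$ and then $G_{\vshid W \vsdel U}$ make sense), and one must get the order of the operations right — hiding $W$ must precede deleting $U$, because a vertex $u \in U \setminus W$ may share an edge $e$ with a vertex of $W$, and that edge has to be pruned of its $W$-vertices (via hiding, not deletion) before the $U$-deletion is applied, on pain of losing $e$ altogether and overcounting. Checking that the bijection of Lemma~\ref{lemm:independent_sets_including} genuinely restricts to the relevant sub-collections, in the correct order, is the one place where care is needed.
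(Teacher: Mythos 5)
Your argument is correct, but it is genuinely a different route from the paper's. The paper proves the simple-graph analogue (Theorem~\ref{theo:rec_rel_sg_vs}) by induction on $\abs{U}$, peeling off one vertex at a time with the standard recurrence and commuting nested deletions via identities such as $G_{\vsdel A \vsdel N_{G_{\vsdel A}}(B)} = G_{\vsdel A \vsdel N_G(B)}$, and then simply declares the hypergraph case ``analogous'' with Theorem~\ref{theo:rec_rel_hg_v} in the role of the base recurrence. You instead argue directly: partition the independent sets $S$ of $G$ by the trace $W = S \cap U$ (which is automatically an independent subset of $U$), use the bijection of Lemma~\ref{lemm:independent_sets_including} to strip off $W$, and then observe that the residual condition of avoiding $U \setminus W$ places you in the induced subgraph $(G_{\vshid W})_{\vsdel (U \setminus W)} = G_{\vshid W \vsdel U}$. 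All the steps check out, including the two delicate points you flag (well-definedness of $G_{\vshid W}$ because $W$ is independent, and the order hide-then-delete so that edges meeting both $W$ and $U \setminus W$ are pruned rather than discarded). What your approach buys is a self-contained, non-inductive proof that explains combinatorially what each summand counts, avoids the operation-commuting bookkeeping of the induction, and in fact reverses the logical dependence: Theorem~\ref{theo:rec_rel_hg_v} becomes the special case $U = \{v\}$ of your argument rather than its engine. What the paper's route buys is uniformity, since the identical induction template covers both the simple-graph and hypergraph statements, which is presumably why the author only sketches the latter.
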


The proof of this statement is analogously to the case of simple graphs given in Theorem \ref{theo:rec_rel_sg_vs}. While the sum in the previous theorem concerns the vertices exactly in the independent sets, it is quite possible to consider the vertices at least in the independent sets, which gives a statement corresponding to Theorem \ref{theo:rec_rel_sg_vs_1}.

\begin{theo} \label{theo:rec_rel_hg_vs_1}
Let $G = (V, E)$ be a hypergraph and $U \subseteq V$ a vertex subset. The independence polynomial satisfies
\begin{align}
I(G, x) = I(G_{\vsdel U}, x) + \sumsr{\substack{\emptyset \subset W \subseteq U \\ W \text{ is independent in }G}}{(-1)^{\abs{W}+1} \: x^{\abs{W}} \cdot I(G_{\vshid W}, x)}
\end{align}
\end{theo}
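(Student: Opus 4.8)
The plan is to argue by a direct inclusion--exclusion count over the independent subsets of $U$, using Lemma~\ref{lemm:independent_sets_including} to reinterpret the summands. Two preliminary observations set everything up. First, every $W$ occurring in the sum is independent in $G$, hence contains no edge, so $G_{\vshid W}$ is well defined, and Lemma~\ref{lemm:independent_sets_including} gives $x^{\abs{W}} \cdot I(G_{\vshid W}, x) = I_{W \subseteq}(G, x)$, the generating function of those independent sets of $G$ that contain $W$. Second, $I(G_{\vsdel U}, x)$ enumerates exactly the independent sets of $G$ that are disjoint from $U$: if $S \subseteq V \setminus U$ then an edge $e$ satisfies $e \subseteq S$ iff $e \cap U = \emptyset$ and $e \subseteq S$, and these are precisely the edges surviving the deletion $\vsdel U$, so the independent sets of $G_{\vsdel U}$ coincide with the independent sets of $G$ avoiding $U$.

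Granting these, I would split the independent sets of $G$ according to whether they meet $U$,
\begin{align*}
I(G, x) = I(G_{\vsdel U}, x) + \sumsr{\substack{S \subseteq V \\ S \text{ is independent in }G \\ S \cap U \neq \emptyset}}{x^{\abs{S}}},
\end{align*}
and it then remains to identify the second summand with $\sum_{\emptyset \subset W \subseteq U,\ W \text{ independent in }G} (-1)^{\abs{W}+1}\, I_{W \subseteq}(G, x)$. For this I would exchange the order of summation: writing $I_{W \subseteq}(G, x) = \sum_{S \supseteq W,\ S \text{ independent}} x^{\abs{S}}$ and collecting, for a fixed independent set $S$ of $G$, its total contribution, that contribution is $x^{\abs{S}}$ times $\sum_{\emptyset \subset W \subseteq S \cap U} (-1)^{\abs{W}+1}$, where the side condition ``$W$ independent in $G$'' has become vacuous because every subset of the independent set $S$ is independent. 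By the elementary identity $\sum_{W \subseteq T} (-1)^{\abs{W}} = 0$ for $T \neq \emptyset$, this alternating sum equals $1$ if $S \cap U \neq \emptyset$ and $0$ if $S \cap U = \emptyset$. Hence the double sum reduces to $\sum_{S \text{ independent in }G,\ S \cap U \neq \emptyset} x^{\abs{S}}$, and combining with the split above proves the theorem.

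The one place the argument must be handled carefully — and the natural spot for a slip — is the final bookkeeping that each independent set of $G$ is counted with total coefficient exactly $1$: an independent set meeting $U$ contributes nothing to $I(G_{\vsdel U}, x)$ but is produced once by the alternating sum, while an independent set avoiding $U$ is produced once by $I(G_{\vsdel U}, x)$ and is cancelled out of the alternating sum. Beyond this, the proof is routine, and no genuinely hypergraph-specific difficulty arises, since the replacement of $G_{\vsdel N_G[W]}$ from the simple-graph version (Theorem~\ref{theo:rec_rel_sg_vs_1}) by $G_{\vshid W}$ is precisely what Lemma~\ref{lemm:independent_sets_including} is built to absorb. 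Alternatively, one could obtain the statement from Theorem~\ref{theo:rec_rel_hg_vs} by Möbius inversion over the subsets of $U$ (passing from the ``exactly in $W$'' to the ``at least $W$'' indexing), but the direct count above appears to be the shortest route.
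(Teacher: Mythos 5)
Your proof is correct and follows essentially the same route as the paper's: split the independent sets by whether they meet $U$, enumerate those meeting $U$ by inclusion--exclusion over the nonempty subsets $W \subseteq U$ they contain, and convert $I_{W \subseteq}(G, x)$ into $x^{\abs{W}} \cdot I(G_{\vshid W}, x)$ via Lemma~\ref{lemm:independent_sets_including}. The only difference is that you spell out the inclusion--exclusion bookkeeping (exchanging the order of summation and evaluating the alternating sum) which the paper merely invokes.
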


\begin{proof}
The independent sets of $G$ not including any vertex of $U$ are enumerated by the first term, $I(G_{\vsdel U}, x)$. Thus, we are left with the enumeration of the independent set including at least one vertex of $U$, which can be calculated via the principle of inclusion-exclusion by $\sum_{\emptyset \subset W \subseteq U}{(-1)^{\abs{W}+1} \cdot I_{W \subseteq}(G, x)}$. Substituting $I_{W \subseteq}(G, x)$ for $x^{\abs{W}} \cdot I(G_{\vshid W}, x)$, following Lemma \ref{lemm:independent_sets_including}, we obtain the second term.
\end{proof}

We present another known recurrence relation for the independence polynomial of simple graphs with respect to an edge deletion.

\begin{prop}[Theorem 3.9 in \cite{hoede1994}] \label{prop:rec_rel_sg_e}
Let $G = (V, E)$ be a simple graph and $e = \{u, v\} \in E$. The independence polynomial satisfies 
\begin{align}
I(G, x) = I(G_{-e}, x) - x^2 \cdot I(G_{\vsdel N[e]}, x).
\end{align}
\end{prop}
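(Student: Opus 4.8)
The plan is to compare the independent sets of $G$ with those of $G_{-e}$. Since $G$ is a simple graph, $e=\{u,v\}$ is the only edge of $G$ that disappears in $G_{-e}$, so a vertex subset $W$ is independent in $G_{-e}$ but not in $G$ exactly when $\{u,v\}\subseteq W$. Hence the independent sets of $G_{-e}$ decompose disjointly into those that are already independent in $G$ and those containing both $u$ and $v$, which in terms of generating functions reads
\[
I(G_{-e}, x) = I(G, x) + I_{\{u,v\}\subseteq}(G_{-e}, x).
\]
After rearranging, it suffices to establish $I_{\{u,v\}\subseteq}(G_{-e},x) = x^{2}\cdot I(G_{\vsdel N_G[e]}, x)$.

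For the first step I would apply Lemma~\ref{lemm:independent_sets_including} to the hypergraph $G_{-e}$ with the vertex subset $\{u,v\}$. This is legitimate: since $G$ is simple it has no loops, so $\{u\}$ and $\{v\}$ are independent; $\{u,v\}$ is independent in $G_{-e}$ because its only edge was $e$; and no edge of $G_{-e}$ is contained in $\{u,v\}$, so $(G_{-e})_{\vshid\{u,v\}}$ is well defined. The lemma then gives $I_{\{u,v\}\subseteq}(G_{-e},x)=x^{2}\cdot I((G_{-e})_{\vshid\{u,v\}},x)$.

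It then remains to show $I((G_{-e})_{\vshid\{u,v\}},x)=I(G_{\vsdel N_G[e]},x)$. The key observation is that hiding the endpoint $u$ turns every surviving incident edge $\{u,w\}$ into the loop $\{w\}$, and symmetrically for $v$; thus in $(G_{-e})_{\vshid\{u,v\}}$ the vertices carrying a loop are exactly those of $(N_G(u)\cup N_G(v))\setminus\{u,v\}$, which equals $N_G[e]\setminus\{u,v\}$ because $e\in E$ forces $u\in N_G(v)$ and $v\in N_G(u)$. By the loop case of Theorem~\ref{theo:rec_rel_hg_v}, deleting a looped vertex does not change the independence polynomial, so I may successively delete all vertices of $N_G[e]\setminus\{u,v\}$; the graph that remains has vertex set $V\setminus N_G[e]$ and, as its edges, precisely the edges of $G$ disjoint from $N_G[e]$, i.e.\ it is $G_{\vsdel N_G[e]}$. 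Combining the three identities then yields the proposition.

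I expect the last step to be the main obstacle: one has to follow the effect of $\vshid$ edge by edge (distinguishing loops from ordinary edges), confirm the set identity $(N_G(u)\cup N_G(v))\setminus\{u,v\}=N_G[e]\setminus\{u,v\}$, and verify that after removing the looped vertices the surviving edges are exactly those of $G_{\vsdel N_G[e]}$. This is also where the hypothesis that $G$ is a \emph{simple} graph is genuinely used: with a loop present $\{u,v\}$ need not be independent in $G_{-e}$, and with a hyperedge the edge-by-edge analysis collapses, precisely as in the discussion preceding Lemma~\ref{lemm:independent_sets_including}.
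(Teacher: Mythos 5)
Your proof is correct and follows the same decomposition the paper sketches after the proposition: $I(G_{-e},x)$ overcounts exactly the independent sets of $G_{-e}$ containing both endpoints of $e$, and that excess equals $x^{2}\cdot I(G_{\vsdel N[e]},x)$. Your route to the second identity via Lemma~\ref{lemm:independent_sets_including} and loop deletion is a sound, if slightly heavier, formalization of the direct observation that an independent set containing $u$ and $v$ must avoid all of $N(u)\cup N(v)$ and otherwise restricts to an arbitrary independent set of $G_{\vsdel N[e]}$.
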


Therein, the first term counts all independent sets of $G$, but additionally counts such including both vertices of $e$. But those are exactly subtracted by the second term.

As easily seen from the value $x^2$ related to the number of vertices of $e$, this recurrence relation is only valid for simple graphs. However, if there is no other edge that is a subset of the observed edge, the statement can be can be generalized to edges with more than two incident vertices using a similar idea.

\begin{theo} \label{theo:rec_rel_hg_e}
Let $G = (V, E)$ be a hypergraph and $e \in E$ an edge with $\abs{e} > 1$. The independence polynomial satisfies
\begin{align}
I(G, x) = 
\begin{cases}
I(G_{-e}, x) - x^{\abs{e}} \cdot I(G_{-e \vshid e}, x) & \text{if } \nexists f \in E \colon f \subseteq e, \\
I(G_{-e}, x) & \text{else.}
\end{cases}
\end{align}
\end{theo}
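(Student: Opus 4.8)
The plan is to compare $I(G,x)$ directly with $I(G_{-e},x)$. A vertex subset $W \subseteq V$ is independent in $G$ if and only if it is independent in $G_{-e}$ and, in addition, $e \nsubseteq W$; indeed, deleting the edge $e$ removes precisely the constraint that this edge imposes. Splitting the independent sets of $G_{-e}$ according to whether or not they contain all of $e$ therefore gives
\begin{align*}
I(G_{-e}, x) = I(G, x) + I_{e \subseteq}(G_{-e}, x),
\end{align*}
so the task reduces to evaluating the correction term $I_{e \subseteq}(G_{-e}, x)$, which enumerates the independent sets of $G_{-e}$ containing all vertices of $e$.

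Next I would split into the two cases of the statement according to whether $e$ is an independent set of $G_{-e}$, which, by the definition of independence, is exactly the condition that no edge of $G_{-e}$ is a subset of $e$ --- this is what the clause ``$\nexists f \in E \colon f \subseteq e$'' amounts to once the quantifier is read over the edges other than the copy of $e$ being deleted. If $e$ \emph{is} such an independent set, then $G_{-e \vshid e}$ is well defined (hiding the vertices of $e$ in $G_{-e}$ cannot create an empty edge), and applying Lemma \ref{lemm:independent_sets_including} to the hypergraph $G_{-e}$ with $U = e$ yields $I_{e \subseteq}(G_{-e}, x) = x^{\abs{e}} \cdot I(G_{-e \vshid e}, x)$; substituting into the identity above gives $I(G, x) = I(G_{-e}, x) - x^{\abs{e}} \cdot I(G_{-e \vshid e}, x)$. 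If, on the other hand, some edge $f$ of $G_{-e}$ satisfies $f \subseteq e$, then every $W$ with $e \subseteq W$ also satisfies $f \subseteq W$, so no such $W$ is independent in $G_{-e}$; hence the correction term vanishes and $I(G, x) = I(G_{-e}, x)$.

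The argument is essentially bookkeeping, and I anticipate no serious obstacle. The points deserving care are that $E$ is a multiset, so $G_{-e}$ may still contain a copy of $e$ --- which lands us in the ``else'' branch and is consistent with $I(G,x) = I(G_{-e},x)$, since repeated edges do not affect independence --- and that the hypothesis of Lemma \ref{lemm:independent_sets_including}, namely that $U = e$ be independent in $G_{-e}$, coincides exactly with the case distinction in the statement, so that the lemma is available precisely when it is needed.
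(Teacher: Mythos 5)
Your proof is correct and follows essentially the same route as the paper's: decompose $I(G_{-e},x)$ into $I(G,x)$ plus the independent sets of $G_{-e}$ containing all of $e$, then evaluate that correction term via Lemma \ref{lemm:independent_sets_including} in the first case and observe that it vanishes in the second. Your extra care about the multiset edge set and about reading the quantifier in ``$\nexists f \in E \colon f \subseteq e$'' over the edges of $G_{-e}$ (so that $f = e$ itself does not trivially satisfy it) is a welcome clarification that the paper's own proof leaves implicit.
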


\begin{proof}
By the first term all independent sets of $G$ are counted, but additionally such independent set including all vertices of $e$.

If there exists an edge $f \in E$ with $f \subseteq e$, then independent set including all vertices of $e$ must also include all vertices of $f$ which contradict the assumption that these are independent sets. Consequently, in this case there are no independent set which are supersets of $e$.

Otherwise, i.e. if there is no such edge $f$, by Lemma \ref{lemm:independent_sets_including} the independent sets including the vertices of $e$ are enumerated by $x^{\abs{e}} \cdot I(G_{-e \vshid e}, x)$.
\end{proof}
\section{New recurrence relations}
\label{sec:new_rec_rel}

There are two other recurrence relations for the independence polynomial of a simple graph $G$ with an edge $e = \{u, v\}$ that appear to be unknown:
\begin{align}
I(G, x) &= I(G_{\vdel u}, x) + I(G_{\vdel v}, x) - I(G_{\vdel e}, x), \\
&= I(G_{-e}, x) - x \cdot I(G_{/e}, x) + x \cdot I(G_{\vsdel e}, x).
\end{align}

We derive generalizations of both equation for hypergraphs from a graph polynomial in four variables, the so-called generalized subgraph counting polynomial, which counts subgraphs with respect to some invariants.

\begin{defi}[Definition 5.1 in \cite{trinks2012c}]
\label{defi:gscp}
Let $G = (V, E)$ be a hypergraph. The \emph{generalized subgraph counting polynomial} $F(G, v, x, y, z)$ is defined as
\begin{align}
F(G, v, x, y, z) = \sumsr{H = (W, F) \subseteq G}{v^{\abs{W}} \, x^{k(H)} \, y^{\abs{F}} \, z^{\abs{E(\vig{G}{W})}}}.
\end{align}
\end{defi}

This graph polynomial generalizes several well-known graph polynomials, among others, the matching polynomial, the Potts model, the edge elimination polynomial and the subgraph component polynomial \cite[Section 5.2]{trinks2012c}. Furthermore, it satisfies a recurrence relation with respect to an edge deletion.

\begin{theo}[Theorem 5.2 in \cite{trinks2012c}]
\label{theo:gscp_rec_rel}
Let $G = (V, E)$ be a hypergraph with an edge $e \in E$. The generalized subgraph counting polynomial $F(G) = F(G, v, x, y, z)$ satisfies
\begin{align}
& F(G) = z \cdot F(G_{-e}) + v^{\abs{e}-1} \, y \, z \cdot F(G_{/e}) - v^{\abs{e}-1} \, y \, z \cdot F(G_{\vsdel e}) \notag \\
& \phantom{F(G)} \eqspace + (z-1) \cdot \sumsr{\emptyset \subset B \subseteq e}{(-1)^{\abs{B}} \cdot F(G_{\vsdel B})}, \\
& F(E_n) = (1 + v \, x)^n.
\end{align}
\end{theo}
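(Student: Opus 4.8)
The plan is to prove the recurrence by expanding $F(G)$ directly from Definition~\ref{defi:gscp} and tracking the contribution of each subgraph $H = (W, F) \subseteq G$ to the four terms on the right-hand side. Write $w_G(H) = v^{\abs{W}}\,x^{k(H)}\,y^{\abs{F}}\,z^{\abs{E(\vig{G}{W})}}$, so that $F(G) = \sum_{H \subseteq G} w_G(H)$. The first step is to partition the subgraphs of $G$ according to their relation with $e$: those with $e \in F$ (for which necessarily $e \subseteq W$), those with $e \notin F$ but $e \subseteq W$, and those with $e \nsubseteq W$ (which automatically have $e \notin F$). The aim is then to show that, after simplification, the pair of terms $v^{\abs{e}-1}yz\,F(G_{/e}) - v^{\abs{e}-1}yz\,F(G_{\vsdel e})$ accounts for the first class, while $z\,F(G_{-e}) + (z-1)\sum_{\emptyset \subset B \subseteq e}(-1)^{\abs{B}}F(G_{\vsdel B})$ accounts for the remaining two.

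For the subgraphs with $e \notin F$: these are exactly the subgraphs of $G_{-e}$, and $w_{G_{-e}}(H)$ differs from $w_G(H)$ only in the exponent of $z$, since $\abs{E((G_{-e})[W])}$ equals $\abs{E(\vig{G}{W})}$ when $e \nsubseteq W$ and equals $\abs{E(\vig{G}{W})} - 1$ when $e \subseteq W$. Hence $z\,F(G_{-e})$ is the sum of $w_G(H)$ over all subgraphs with $e \notin F$, with an extra factor $z$ attached precisely to those with $e \nsubseteq W$; consequently $\sum_{H \subseteq G,\ e \notin F} w_G(H) = z\,F(G_{-e}) - (z-1)\sum_{H \subseteq G,\ e \nsubseteq W} w_G(H)$. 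It then remains to recognise the last sum via the $F(G_{\vsdel B})$. One checks that $F(G_{\vsdel B}) = \sum_{H \subseteq G,\ W \cap B = \emptyset} w_G(H)$ for every $B \subseteq e$: the $z$-exponents agree because $W \cap B = \emptyset$ forces $\{f \in E \mid f \subseteq W\} = \{f \in E \mid f \cap B = \emptyset,\ f \subseteq W\}$, and the same identity shows the edge sets $F$ ranged over coincide. Substituting, interchanging the order of summation, and using that $\sum_{\emptyset \subset B \subseteq e \setminus W}(-1)^{\abs{B}}$ equals $-1$ if $e \nsubseteq W$ and $0$ otherwise, one gets $\sum_{\emptyset \subset B \subseteq e}(-1)^{\abs{B}}F(G_{\vsdel B}) = -\sum_{H \subseteq G,\ e \nsubseteq W} w_G(H)$, which yields the $z\,F(G_{-e})$ term together with $(z-1)\sum_{\emptyset \subset B \subseteq e}(-1)^{\abs{B}}F(G_{\vsdel B})$.

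For the subgraphs with $e \in F$: I would use the contraction bijection sending $H = (W, F)$ to the subgraph $H' = (W', F')$ of $G_{/e}$ obtained by identifying the vertices of $e$ to the new vertex $\bar e$, dropping the edge $e$, and transporting the remaining edges (as a multiset), the inverse being the evident lift of a subgraph of $G_{/e}$ containing $\bar e$. Under this bijection $\abs{W'} = \abs{W} - \abs{e} + 1$, $\abs{F'} = \abs{F} - 1$, $k(H') = k(H)$ (the presence of $e$ in $H$ puts all vertices of $e$ into one component, which becomes the component of $\bar e$), and $\abs{E((G_{/e})[W'])} = \abs{E(\vig{G}{W})} - 1$, so $w_G(H) = v^{\abs{e}-1}\,y\,z\cdot w_{G_{/e}}(H')$. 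Therefore $\sum_{H \subseteq G,\ e \in F} w_G(H) = v^{\abs{e}-1}yz\sum_{H' \subseteq G_{/e},\ \bar e \in W'} w_{G_{/e}}(H') = v^{\abs{e}-1}yz\bigl(F(G_{/e}) - F((G_{/e})_{\vsdel \bar e})\bigr)$, and since $(G_{/e})_{\vsdel \bar e} = G_{\vsdel e}$ — deleting $\bar e$ removes exactly the vertices of $e$ and the edges meeting $e$ — this equals $v^{\abs{e}-1}yz\,F(G_{/e}) - v^{\abs{e}-1}yz\,F(G_{\vsdel e})$. Adding the three contributions gives the recurrence. The initial value is immediate, since the only subgraphs of $E_n$ are the $(W, \emptyset)$ with $k = \abs{W}$, whence $F(E_n) = \sum_{W \subseteq V}(vx)^{\abs{W}} = (1 + vx)^n$.

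The step I expect to be the main obstacle is the bookkeeping around contraction: one must verify carefully that contraction really induces a multiset bijection on edges even when it produces parallel edges or a loop $\{\bar e\}$ from an edge $f \subsetneq e$, that it leaves the component count unchanged, and that it decreases the induced-edge exponent by exactly $1$; this is paired with the companion observation on the deletion side that passing to $G_{\vsdel B}$ does not alter which edges of $G$ a surviving subgraph induces, which is what makes each $F(G_{\vsdel B})$ a clean sub-sum of $F(G)$.
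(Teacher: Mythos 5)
Your argument is correct. Note that the paper itself offers no proof of this statement --- it is imported verbatim from \cite{trinks2012c} as a black box --- so there is nothing internal to compare against; your proposal is a self-contained verification. The three-way partition of subgraphs $H=(W,F)$ according to whether $e\in F$, or $e\notin F$ with $e\subseteq W$, or $e\nsubseteq W$, is exactly the right decomposition, and each of the delicate points checks out: for $B\subseteq e$ the identity $F(G_{\vsdel B})=\sum_{W\cap B=\emptyset}w_G(H)$ holds because $f\subseteq W$ and $W\cap B=\emptyset$ force $f\cap B=\emptyset$, so the induced-edge exponent is unchanged; the alternating sum $\sum_{\emptyset\subset B\subseteq e\setminus W}(-1)^{\abs{B}}=0^{\abs{e\setminus W}}-1$ produces the indicator of $e\nsubseteq W$ as you claim; and on the contraction side the multiset bijection $f\mapsto f/e$ between $E(G)\setminus\{e\}$ and $E(G_{/e})$ (parallel edges and loops $\{\bar e\}$ included) together with $e\in F$ gives $\abs{W'}=\abs{W}-\abs{e}+1$, $\abs{F'}=\abs{F}-1$, $k(H')=k(H)$ and $\abs{E(G_{/e}[W'])}=\abs{E(G[W])}-1$, whence the factor $v^{\abs{e}-1}yz$. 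The only step you state slightly elliptically is the identification $\sum_{\bar e\in W'}w_{G_{/e}}(H')=F(G_{/e})-F((G_{/e})_{\vsdel \bar e})$, which needs the (easy) observation that for subgraphs avoiding $\bar e$ the weight in $G_{/e}$ and in $(G_{/e})_{\vsdel\bar e}$ coincide; with that remark made explicit the proof is complete, and the initial value $F(E_n)=(1+vx)^n$ is immediate as you say.
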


The independence polynomial can be derived from the generalized subgraph counting polynomial through two different techniques. From these, two different recurrence relations follow, although both derivations are based on the fact that in hypergraphs a vertex subset $W$ is independent if the subgraph of $G$ induced by $W$ is edgeless.

\begin{theo} \label{theo:rel_gscp_i_1}
Let $G = (V, E)$ be a hypergraph. The independence polynomial is encoded in the generalized subgraph counting polynomial:
\begin{align}
I(G, x) = H(G, 1, x, 1, 0).
\end{align}
\end{theo}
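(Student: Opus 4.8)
The plan is to substitute the values $v = 1$, $y = 1$, $z = 0$ directly into the definition of the generalized subgraph counting polynomial and check that the resulting sum reduces to the independence polynomial. (Note there is a small typo in the statement: $H$ should read $F$; I will prove $I(G,x) = F(G,1,x,1,0)$.) Starting from
\begin{align*}
F(G, 1, x, 1, 0) = \sumsr{H = (W, F) \subseteq G}{1^{\abs{W}} \, x^{k(H)} \, 1^{\abs{F}} \, 0^{\abs{E(\vig{G}{W})}}},
\end{align*}
the factor $0^{\abs{E(G[W])}}$ vanishes unless $\abs{E(G[W])} = 0$, i.e.\ unless the induced subgraph $G[W]$ is edgeless, which (as the paper notes) is exactly the condition that $W$ is independent in $G$. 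So the sum collapses to those subgraphs $H = (W, F)$ whose vertex set $W$ is independent.

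The key remaining step is to handle the sum over edge subsets $F$. If $W$ is independent, then $G[W]$ has no edges, so the only subgraph of $G$ on vertex set $W$ is the edgeless one, $F = \emptyset$ (any $e \in F$ would satisfy $e \subseteq W$ and hence lie in $E(G[W])$, contradicting $W$ independent). Thus for each independent $W$ there is a unique contributing subgraph, namely $(W, \emptyset)$, and for it $k((W,\emptyset)) = \abs{W}$ since an edgeless graph on $W$ has $\abs{W}$ components (the empty graph on the empty set contributing $0$, consistently). Hence
\begin{align*}
F(G, 1, x, 1, 0) = \sumsr{\substack{W \subseteq V \\ W \text{ is independent in }G}}{x^{\abs{W}}} = I(G, x).
\end{align*}

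I expect the main (minor) obstacle to be bookkeeping about conventions: confirming that $k(H)$ counts isolated vertices of $H$ as components (so that an edgeless graph on $\abs{W}$ vertices has $k = \abs{W}$), and that $0^0 = 1$ is the intended reading so that subgraphs with $\abs{E(G[W])} = 0$ survive while all others are killed. Both are standard for this polynomial, so once these conventions are pinned down the argument is a direct evaluation with no real difficulty.
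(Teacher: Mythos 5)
Your proof is correct and follows essentially the same route as the paper: substitute the values into the definition, observe that the factor $0^{\abs{E(G[W])}}$ selects exactly the independent sets $W$, and note that the unique surviving subgraph on such $W$ is $(W,\emptyset)$ with $k = \abs{W}$. You actually spell out the last two steps (uniqueness of $F=\emptyset$ and the component count) more explicitly than the paper does, and your remark about the $H$/$F$ notation mismatch in the statement is well taken.
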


\begin{proof}
The statement follows directly from the definition of both polynomials:
\begin{align*}
H(G, 1, x, 1, 0)
&= \sumsr{H = (W, F) \subseteq G}{1^{\abs{W}} \, x^{k(H)} \,  1^{\abs{F}} \, 0^{\abs{E(\vig{G}{W})}}} 
= \sumsr{\substack{H = (W, F) \subseteq G \\ \abs{E(\vig{G}{W})} = 0}}{x^{k(H)}} \\
&= \sumsr{\substack{W \subseteq V \\ W \text{ is independent in } G}}{x^{\abs{W}}} 
= I(G, x). \qedhere
\end{align*}
\end{proof}

\begin{coro} \label{coro:rec_rel_hg_e_1}
Let $G = (V, E)$ be a hypergraph with an edge $e \in E$. The independence polynomial satisfies
\begin{align}
I(G, x) = \sumsr{\emptyset \subset W \subseteq e}{(-1)^{\abs{W}+1} \cdot I(G_{\vsdel W}, x)}.
\end{align}
\end{coro}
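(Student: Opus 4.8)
\emph{Proof proposal.} The plan is to read this identity off the recurrence relation for the generalized subgraph counting polynomial (Theorem~\ref{theo:gscp_rec_rel}) by specializing the four variables exactly as in Theorem~\ref{theo:rel_gscp_i_1}. Since Theorem~\ref{theo:gscp_rec_rel} is an identity of polynomials in $v, x, y, z$, it remains valid after substituting $v = 1$, $y = 1$, $z = 0$, and by Theorem~\ref{theo:rel_gscp_i_1} this substitution sends the generalized subgraph counting polynomial $F(H)$ to the independence polynomial $I(H, x)$ for every hypergraph $H$; in particular it does so simultaneously for $G$ and for all of $G_{-e}$, $G_{/e}$, $G_{\vsdel e}$ and $G_{\vsdel B}$ with $\emptyset \subset B \subseteq e$.

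I would then carry out the substitution term by term. The first three summands on the right-hand side of the recurrence each carry a factor $z$ — they are $z \cdot F(G_{-e})$, $v^{\abs{e}-1} y z \cdot F(G_{/e})$ and $v^{\abs{e}-1} y z \cdot F(G_{\vsdel e})$ — so all three vanish at $z = 0$. The remaining summand has prefactor $(z - 1)$, which equals $-1$ at $z = 0$, so the recurrence collapses to $I(G, x) = -\sum_{\emptyset \subset B \subseteq e} (-1)^{\abs{B}} I(G_{\vsdel B}, x)$. Absorbing the global sign into the exponent via $-(-1)^{\abs{B}} = (-1)^{\abs{B}+1}$ and renaming $B$ to $W$ yields the claimed formula.

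There is essentially no obstacle here: the argument is a one-line specialization, and the only thing to keep in mind is the convention $z^{0} = 1$ already used in the proof of Theorem~\ref{theo:rel_gscp_i_1}, which is precisely what makes the three $z$-terms disappear at $z = 0$ while leaving the $(z-1)$-term intact. If a self-contained argument is preferred, one can instead observe that for $W \subseteq e$ the polynomial $I(G_{\vsdel W}, x)$ counts exactly the independent sets of $G$ disjoint from $W$ — deleting $W$ only removes edges meeting $W$, and such an edge cannot be contained in a set avoiding $W$ — so after exchanging the two summations the coefficient of $x^{\abs{A}}$ for an independent set $A$ equals $\sum_{\emptyset \subset W \subseteq e \setminus A} (-1)^{\abs{W}+1}$, which is $1$ because $e \setminus A \neq \emptyset$ (an independent set contains no edge). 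Either route completes the proof; the derivation from Theorem~\ref{theo:gscp_rec_rel} is the natural one in this context.
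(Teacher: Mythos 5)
Your proposal is correct and matches the paper's treatment on both counts: the paper presents this identity precisely as a corollary of Theorem~\ref{theo:rel_gscp_i_1} via the specialization $v=1$, $y=1$, $z=0$ of the recurrence in Theorem~\ref{theo:gscp_rec_rel}, and its ``another combinatorial proof'' is the same inclusion--exclusion over the nonempty subsets $W \subseteq e$ missed by an independent set, using that $I(G_{\vsdel W}, x)$ enumerates the independent sets of $G_{-e}$ avoiding $W$. No changes needed.
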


We give another combinatorial proof via an inclusion-exclusion argument.
\begin{proof}
The independent sets of $G$ are the independent sets of $G_{\edel e}$ not including all vertices of $e$, or, equivalently, missing at least one vertex of $e$. The independent sets of $G_{\edel e}$ missing at least the non-empty vertex set  $W \subseteq e$ are enumerated by $G_{-e \vsdel W} = G_{\vsdel W}$. Consequently, via the principle of inclusion-exclusion the statement follows.
\end{proof}

\begin{theo} \label{theo:rel_gscp_i_2}
Let $G = (V, E)$ be a hypergraph. The independence polynomial is encoded in the generalized subgraph counting polynomial:
\begin{align}
I(G, x) = H(G, x, 1, -1, 1).
\end{align}
\end{theo}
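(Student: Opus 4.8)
\emph{Proof proposal.} The plan is to evaluate the generalized subgraph counting polynomial of Definition~\ref{defi:gscp} at $(v, x, y, z) = (x, 1, -1, 1)$ directly, just as in the proof of Theorem~\ref{theo:rel_gscp_i_1}, except that now the restriction to independent sets will come from the sign $(-1)^{\abs{F}}$ rather than from the factor $0^{\abs{E(\vig{G}{W})}}$. First I would split the sum over subgraphs $H = (W, F) \subseteq G$ into an outer sum over vertex subsets $W \subseteq V$ and an inner sum over edge sub-multisets $F \subseteq E(\vig{G}{W})$, using that every edge of a subgraph is contained in its vertex set. With the substitution $v = x$, $x = 1$, $y = -1$, $z = 1$, the term attached to $(W, F)$ becomes $x^{\abs{W}} \cdot 1^{k(H)} \cdot (-1)^{\abs{F}} \cdot 1^{\abs{E(\vig{G}{W})}} = x^{\abs{W}} (-1)^{\abs{F}}$; in particular the component count $k(H)$ and the exponent $\abs{E(\vig{G}{W})}$ drop out, so that $F(G, x, 1, -1, 1) = \sum_{W \subseteq V} x^{\abs{W}} \sum_{F \subseteq E(\vig{G}{W})} (-1)^{\abs{F}}$.

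The main step is then to evaluate the inner sum $\sum_{F \subseteq E(\vig{G}{W})} (-1)^{\abs{F}}$. Choosing a sub-multiset $F$ of $E(\vig{G}{W})$ amounts to deciding independently, for each of the $\abs{E(\vig{G}{W})}$ edge slots, whether to keep it, so by the binomial theorem this sum equals $(1 + (-1))^{\abs{E(\vig{G}{W})}} = 0^{\abs{E(\vig{G}{W})}}$, which is $1$ when $E(\vig{G}{W}) = \emptyset$ and $0$ otherwise. By the definition of an independent set, $E(\vig{G}{W}) = \emptyset$ holds exactly when $W$ is independent in $G$. Hence only the independent $W$ survive, each contributing $x^{\abs{W}}$, and we obtain $F(G, x, 1, -1, 1) = \sum_{W \subseteq V,\ W \text{ independent in } G} x^{\abs{W}} = I(G, x)$.

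I do not expect a real obstacle here: this is the same binomial/inclusion--exclusion cancellation that already drives Corollary~\ref{coro:rec_rel_hg_e_1} and the recurrence of Theorem~\ref{theo:gscp_rec_rel}. The one point deserving a sentence of care is that $E(G)$, and hence $E(\vig{G}{W})$, is a \emph{multiset}, so parallel copies of an edge must be counted as separate slots; this changes nothing, since each copy still contributes an independent factor $1 + (-1) = 0$. It is worth noting that this evaluation, together with the edge-deletion recurrence of Theorem~\ref{theo:gscp_rec_rel} (whose correction term $(z-1)\sum_{\emptyset \subset B \subseteq e} (-1)^{\abs{B}} F(G_{\vsdel B})$ vanishes at $z = 1$), is precisely what will yield the second new recurrence $I(G, x) = I(G_{-e}, x) - x \cdot I(G_{/e}, x) + x \cdot I(G_{\vsdel e}, x)$ announced at the beginning of this section.
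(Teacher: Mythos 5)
Your proposal is correct and matches the paper's own proof essentially line for line: substitute the values, split the sum over subgraphs into an outer sum over vertex subsets and an inner sum over edge sub(multi)sets of the induced subgraph, and collapse the inner sum to $0^{\abs{E(\vig{G}{W})}}$ by the binomial theorem so that only independent sets survive. The extra remark about multiset edges is a harmless clarification; no changes needed.
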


\begin{proof}
The statement follows directly from the definition of both polynomials:
\begin{align*}
H(G, x, 1, -1, 1)
&= \sumsr{H = (W, F) \subseteq G}{x^{\abs{W}} \, 1^{k(H)} \, (-1)^{\abs{F}} \, 1^{\abs{E(\vig{G}{W})}}} \\
&= \sumsr{H = (W, F) \subseteq G}{x^{\abs{W}} \, (-1)^{\abs{F}}} 
= \sumsr{W \subseteq V}{x^{\abs{W}} \sumsr{F \subseteq E(\vig{G}{W})}{(-1)^{\abs{F}}}} \\
&= \sumsr{W \subseteq V}{x^{\abs{W}} \, 0^{\abs{E(\vig{G}{W})}}} 
= \sumsr{\substack{W \subseteq V \\ W \text{ is independent in } G}}{x^{\abs{W}}} 
= I(G, x). \qedhere
\end{align*}
\end{proof}

\begin{coro} \label{coro:rec_rel_hg_e_2}
Let $G = (V, E)$ be a hypergraph with an edge $e \in E$. The independence polynomial satisfies
\begin{align}
I(G, x) = I(G_{-e}, x) - x^{\abs{e}-1} \cdot I(G_{/e}, x) + x^{\abs{e}-1} \cdot I(G_{\vsdel e}, x).
\end{align}
\end{coro}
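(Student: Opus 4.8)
The plan is to specialize the edge-deletion recurrence relation for the generalized subgraph counting polynomial (Theorem \ref{theo:gscp_rec_rel}) at the point $(v, x, y, z) = (x, 1, -1, 1)$, which encodes the independence polynomial by Theorem \ref{theo:rel_gscp_i_2}. First I would write out Theorem \ref{theo:gscp_rec_rel} verbatim and then substitute $v \mapsto x$, $x \mapsto 1$, $y \mapsto -1$, $z \mapsto 1$ into each of the four terms on the right-hand side. The crucial observation is that the last summand carries the prefactor $(z-1)$, so at $z = 1$ the entire alternating sum $\sum_{\emptyset \subset B \subseteq e}(-1)^{\abs{B}} F(G_{\vsdel B})$ vanishes; this is exactly why the clean three-term identity emerges here, in contrast to the inclusion–exclusion form of Corollary \ref{coro:rec_rel_hg_e_1}.

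Next I would evaluate the surviving coefficients. The first term $z \cdot F(G_{-e})$ becomes $F(G_{-e}) = I(G_{-e}, x)$ by Theorem \ref{theo:rel_gscp_i_2} applied to $G_{-e}$. The second and third terms share the prefactor $v^{\abs{e}-1}\, y\, z$, which at our evaluation point equals $x^{\abs{e}-1} \cdot (-1) \cdot 1 = -x^{\abs{e}-1}$; hence $v^{\abs{e}-1} y z \cdot F(G_{/e})$ becomes $-x^{\abs{e}-1} \cdot I(G_{/e}, x)$ and $-v^{\abs{e}-1} y z \cdot F(G_{\vsdel e})$ becomes $+x^{\abs{e}-1} \cdot I(G_{\vsdel e}, x)$, again using Theorem \ref{theo:rel_gscp_i_2} for the graphs $G_{/e}$ and $G_{\vsdel e}$. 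Assembling these three contributions gives precisely the claimed identity
\begin{align*}
I(G, x) = I(G_{-e}, x) - x^{\abs{e}-1} \cdot I(G_{/e}, x) + x^{\abs{e}-1} \cdot I(G_{\vsdel e}, x).
\end{align*}

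I do not anticipate a genuine obstacle here, since the argument is a direct substitution once Theorems \ref{theo:gscp_rec_rel} and \ref{theo:rel_gscp_i_2} are in hand; the only point requiring a word of care is the bookkeeping of signs coming from $y = -1$, and the implicit check that the operations $-e$, $/e$ and $\vsdel e$ produce hypergraphs to which Theorem \ref{theo:rel_gscp_i_2} legitimately applies (in particular that no empty edge is created, which for $G_{\vsdel e}$ and $G_{-e}$ is automatic, and for $G_{/e}$ holds since contraction unifies vertices rather than emptying edges). If a fully self-contained account were wanted instead, one could alternatively give a direct inclusion–exclusion proof: split the independent sets of $G$ according to whether they contain none, or exactly the endpoints of a proper sub-configuration of $e$, mirroring the combinatorial proof offered for Corollary \ref{coro:rec_rel_hg_e_1}; but the specialization route is shorter and I would present that.
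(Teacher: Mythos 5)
Your proposal is correct and is precisely the paper's intended derivation: the statement is presented as a corollary of Theorems \ref{theo:gscp_rec_rel} and \ref{theo:rel_gscp_i_2}, obtained by the substitution $(v,x,y,z)=(x,1,-1,1)$, under which the $(z-1)$-weighted sum vanishes and the coefficient $v^{\abs{e}-1}yz$ becomes $-x^{\abs{e}-1}$, exactly as you compute. The paper then supplies, as a supplement, the same kind of combinatorial inclusion--exclusion argument you mention as an alternative at the end.
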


Here, we present another combinatorial argument to support this statement.

\begin{proof}
By the independence polynomial of $G_{-e}$, $I(G_{-e}, x)$, all independent sets of $G$ are enumerated. However, there are some independent sets of $G_{-e}$ including all the vertices of $e$, which are not independent in $G$. To counterbalance this, we subtract $x^{\abs{e}-1} \cdot I(G_{/e}, x)$,
where the vertices incident $e$ are contracted to the vertex $w$. The independent sets counted by this term including $w$ correspond to the independent sets of $G_{-e}$ including all the vertices of $e$ and thus compensate the independent sets additionally counted by $I(G_{-e}, x)$. But at the same time, we also subtract the terms corresponding to independent sets of $G_{/e}$ not including $w$. But these sets are enumerated by $I(G_{/e -w}, x)$, which is equivalent to $I(G_{\vsdel e}, x)$. Thus, the subtracted excess terms are compensated by $x^{\abs{e}-1} \cdot I(G_{\vsdel e}, x)$.
\end{proof}

The subdivision of an edge is a well-known graph operation. It has been used by \textcite[pp.\ 222]{merrifield1989} in a recurrence relation for the number of independent sets $\sigma(G) = I(G, 1)$: For a simple graph $G = (V, E)$ with an edge $e$ it is true that
\begin{align}
\sigma(G) = \sigma(G_{\times e}) - \sigma(G_{/e}).
\end{align}

This result can be generalized simultaneously to the independence polynomial and to hypergraphs.

\begin{theo}
Let $G = (V, E)$ be a hypergraph with an edge $e \in E$. The independence polynomial $I(G, x)$ satisfies 
\begin{align}
I(G, x) = I(G_{\esub e}, x) - x^{\abs{e}-1} \cdot I(G_{/e}, x) + (x^{\abs{e}-1} - x) \cdot I(G_{\vsdel e}, x).
\end{align}
\end{theo}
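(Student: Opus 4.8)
The plan is to deduce the identity from the edge-contraction recurrence of Corollary~\ref{coro:rec_rel_hg_e_2} together with one auxiliary identity expressing the effect of a subdivision on the independence polynomial, namely
\begin{align}
I(G_{\esub e}, x) = I(G_{-e}, x) + x \cdot I(G_{\vsdel e}, x).
\end{align}
Granting this, one substitutes $I(G_{-e}, x) = I(G_{\esub e}, x) - x \cdot I(G_{\vsdel e}, x)$ into the formula of Corollary~\ref{coro:rec_rel_hg_e_2}: the term $I(G_{-e}, x)$ becomes $I(G_{\esub e}, x)$, the term $-x^{\abs{e}-1} \cdot I(G_{/e}, x)$ is unchanged, and the coefficient of $I(G_{\vsdel e}, x)$ turns into $x^{\abs{e}-1} - x$, which is precisely the claimed relation.

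It thus remains to prove the auxiliary identity, which I would do by a direct count. By definition $G_{\esub e}$ has vertex set $V \cup \{d\}$ for a new vertex $d$ and edge set $(E \setminus \{e\}) \cup \{\{v,d\} \mid v \in e\}$. Split the independent sets $W$ of $G_{\esub e}$ according to whether $d \in W$. If $d \notin W$, then $W \subseteq V$, and since no edge $\{v,d\}$ can be contained in $W$, the condition that $W$ be independent in $G_{\esub e}$ is exactly that $W$ contain no edge of $E \setminus \{e\}$, i.e.\ that $W$ be independent in $G_{-e}$; these sets contribute $I(G_{-e}, x)$. If $d \in W$, write $W = W' \cup \{d\}$ with $W' \subseteq V$. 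Then $\{v,d\} \not\subseteq W$ for all $v \in e$ forces $W' \cap e = \emptyset$, while the remaining constraints require that $W'$ contain no edge of $E \setminus \{e\}$. Because $W'$ is disjoint from $e$, it automatically avoids every edge meeting $e$, so this is equivalent to $W'$ being independent in $G_{\vsdel e}$; since $\abs{W} = \abs{W'} + 1$, these sets contribute $x \cdot I(G_{\vsdel e}, x)$. Adding the two contributions yields the auxiliary identity.

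The step requiring the most care is the identification of the $d \in W$ case with the independent sets of $G_{\vsdel e}$: the operation $\vsdel e$ removes not only the vertices of $e$ but also all edges incident to them, whereas a priori $W'$ only has to avoid the edges of $E \setminus \{e\}$ lying inside $V \setminus e$; the point is that for a set $W'$ already disjoint from $e$ these two requirements coincide. Apart from this bookkeeping the argument is routine, and, just as in Corollary~\ref{coro:rec_rel_hg_e_2}, no case distinction (of the kind appearing in Theorem~\ref{theo:rec_rel_hg_e}) is necessary, since $G_{\esub e}$, $G_{/e}$, $G_{-e}$ and $G_{\vsdel e}$ are all defined for every edge $e$.
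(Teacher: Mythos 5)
Your proposal is correct and follows essentially the same route as the paper: the paper likewise first establishes $I(G_{\esub e}, x) = I(G_{-e}, x) + x \cdot I(G_{\vsdel e}, x)$ by splitting the independent sets of $G_{\esub e}$ according to whether they contain the new vertex $d$, and then substitutes into Corollary~\ref{coro:rec_rel_hg_e_2}. Your extra remark on why the $d \in W$ case matches $G_{\vsdel e}$ (deletion removing incident edges being harmless for sets disjoint from $e$) is a careful filling-in of a detail the paper leaves implicit.
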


\begin{proof}
The independent sets of $G_{\esub e}$ can be divided into two sets, those that include the newly inserted vertex $d$ and those that do not. The independent sets of the first kind (including $d$) cannot also include any of the vertices of $e$, as those vertices are linked by an edge with $d$. Therefore, $I_{d \in e}(G_{\esub e}) = x \cdot I(G_{\vsdel e})$.
The independent sets of the second kind (not including $d$) are the independent sets of $G_{\esub e - d} = G_{-e}$. Thus, we have
\[ I(G_{\esub e}) = x \cdot I(G_{\vsdel e}) + I(G_{-e}). \]
Substituting $I(G_{-e})$ in Corollary \ref{coro:rec_rel_hg_e_2}, the result follows from the equation above.
\end{proof}
\section{Vertex cover polynomial}
\label{sec:vertex_cover_polynomial}

We now discuss a graph polynomial that is strongly related to the independence polynomial and even equivalent to it, the vertex cover polynomial. This graph polynomial is the generating function for vertex covers and was introduced by \textcite{dong2002}. 

\begin{defi}
Let $G = (V, E)$ be a hypergraph. A vertex subset $W \subseteq V$ is a \emph{vertex cover} in $G$ if $\forall e \in E \colon e \cap W \neq \emptyset$.
\end{defi}
In other words, a vertex cover includes at least one vertex from each edge.

\begin{defi}
Let $G = (V, E)$ be a hypergraph. The \emph{vertex cover polynomial} $\Psi(G, x)$ is defined as
\begin{align}
\Psi(G, x) = \sumsr{\substack{W \subseteq V \\ W \text{ is a vertex cover in }G}}{x^{\abs{W}}}.
\end{align}
\end{defi}

Just as in the case of the independence polynomial, for edgeless graphs each vertex subset is a vertex cover and therefore $\Psi(E_n) = (1+x)^n$. 

While an independent set misses at least one vertex of each edge, a vertex cover includes at least one vertex of each edge. Therefore, the vertices not in an independent set form a vertex cover and vice versa.

\begin{prop}
Let $G = (V, E)$ be a hypergraph with a vertex subset $W \subseteq V$. $W$ is an independent set if and only if $V \setminus W$ is a vertex cover.
\end{prop}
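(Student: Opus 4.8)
The statement to prove is purely set-theoretic: $W \subseteq V$ is an independent set in $G = (V, E)$ if and only if $V \setminus W$ is a vertex cover in $G$. The plan is to unwind both definitions and observe that they are contrapositives of each other, edge by edge.

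First I would recall the two defining conditions. By definition, $W$ is independent in $G$ precisely when $e \nsubseteq W$ for every $e \in E$; and $V \setminus W$ is a vertex cover in $G$ precisely when $e \cap (V \setminus W) \neq \emptyset$ for every $e \in E$. So it suffices to show, for a single fixed edge $e \in E$, that $e \nsubseteq W$ holds if and only if $e \cap (V \setminus W) \neq \emptyset$. Since every edge is a non-empty subset of $V$, we have $e \cap (V \setminus W) = e \setminus W$, and this set is non-empty exactly when there is some vertex of $e$ not lying in $W$, i.e.\ exactly when $e \nsubseteq W$. This equivalence then holds for all $e \in E$ simultaneously, which gives the claimed biconditional.

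I would write the argument as a short chain of equivalences: $W$ is independent $\iff \forall e \in E\colon e \nsubseteq W \iff \forall e \in E\colon e \setminus W \neq \emptyset \iff \forall e \in E\colon e \cap (V\setminus W) \neq \emptyset \iff V \setminus W$ is a vertex cover. No induction or case analysis is needed, and no earlier result from the paper is required beyond the definitions already given.

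There is no real obstacle here; the only point deserving a word of care is the use of the edges being non-empty (so that ``not all of $e$ is in $W$'' genuinely forces a witness vertex outside $W$), and the elementary identity $e \cap (V \setminus W) = e \setminus W$ valid because $e \subseteq V$. Both are immediate from the definition of a hypergraph, so the proof is essentially a one-line observation dressed up as a chain of iff's.
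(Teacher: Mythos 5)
Your proof is correct and matches the paper's reasoning: the paper states this proposition without a formal proof, offering only the one-sentence observation that an independent set misses at least one vertex of each edge while a vertex cover contains at least one, which is exactly the edge-by-edge complementation equivalence you spell out. Your version is just a more careful writeup of the same argument.
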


From this proposition the relations between both graph polynomials follows directly.

\begin{prop} \label{prop:rel_i_psi}
Let $G = (V, E)$ be a hypergraph. The independence polynomial $I(G, x)$ and the vertex cover polynomial $\Psi(G, x)$ are equivalent graph polynomials related by
\begin{alignat}{2}
& I(G, x) &&= x^{n(G)} \cdot \Psi(G, x^{-1}), \\
& \Psi(G, x) &&= x^{n(G)} \cdot I(G, x^{-1}).
\end{alignat}
\end{prop}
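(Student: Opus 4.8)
The plan is to reduce everything to the complementation bijection supplied by the preceding proposition. That proposition states that $W \subseteq V$ is an independent set in $G$ if and only if $V \setminus W$ is a vertex cover in $G$. Since $W \mapsto V \setminus W$ is an involution on the power set $2^{V}$, it restricts to a bijection between the family of independent sets of $G$ and the family of vertex covers of $G$, and it satisfies $\abs{V \setminus W} = n(G) - \abs{W}$.

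First I would prove the second identity. Starting from the definition of the vertex cover polynomial, I would reindex the sum by replacing each vertex cover $W$ with the independent set $W' = V \setminus W$:
\begin{align*}
\Psi(G, x)
= \sumsr{\substack{W \subseteq V \\ W \text{ is a vertex cover in }G}}{x^{\abs{W}}}
= \sumsr{\substack{W' \subseteq V \\ W' \text{ is independent in }G}}{x^{n(G) - \abs{W'}}}
= x^{n(G)} \sumsr{\substack{W' \subseteq V \\ W' \text{ is independent in }G}}{(x^{-1})^{\abs{W'}}}
= x^{n(G)} \cdot I(G, x^{-1}).
\end{align*}
A remark is in order on the manipulation with $x^{-1}$: because every independent set has at most $n(G)$ vertices, $\deg_x I(G, x) \le n(G)$, so $x^{n(G)} \cdot I(G, x^{-1})$ is again a genuine polynomial in $x$; hence the chain of equalities above is an identity of polynomials, not merely of rational functions.

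For the first identity I would run the same computation with the roles of independent sets and vertex covers exchanged (using the bijection in the other direction and $\deg_x \Psi(G, x) \le n(G)$), or equivalently substitute $x \mapsto x^{-1}$ in the identity just proved and multiply through by $x^{n(G)}$. Finally, the assertion that $I(G, x)$ and $\Psi(G, x)$ are equivalent graph polynomials is immediate from the two displayed formulas, since each of the two polynomials is an explicit invertible transform of the other. I do not anticipate any real obstacle here; the only point requiring a word of care is the legitimacy of evaluating at $x^{-1}$, which is handled by the degree bounds noted above.
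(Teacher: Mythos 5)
Your proposal is correct and follows exactly the route the paper intends: the paper gives no explicit proof, merely noting that the identities ``follow directly'' from the preceding proposition that $W$ is independent if and only if $V \setminus W$ is a vertex cover, which is precisely the reindexing you carry out. Your added remark on the degree bound $\deg_x I(G,x) \le n(G)$ justifying the substitution $x \mapsto x^{-1}$ is a welcome extra precision but does not change the argument.
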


Furthermore, the recurrence relations from both are related. This is a special case of a more general theorem relating recurrence relations of graph polynomials \cite[Theorem 3.18]{trinks2012c}.

\begin{theo} \label{theo:rec_rel_i_psi}
Let $G = (V, E)$ be a hypergraph. The recurrence relations of the independence polynomial $I(G, x)$ and the vertex cover polynomial $\Psi(G, x)$ are related to each other by:
\begin{align}
\Psi(G, x) &= \sum_{i}{a_i(x^{-1}) \: x^{n(G)-n(G_i)} \cdot \Psi(G_i, x)} \\
\intertext{if and only if}
I(G, x) &= \sum_{i}{a_i(x) \cdot I(G_i, x)},
\end{align}
where $a_i(x)$ are polynomials in $x$ and $G_i$ are hypergraphs (not necessarily arising from graph operations).
\end{theo}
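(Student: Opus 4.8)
The plan is to use the equivalence from Proposition \ref{prop:rel_i_psi} to translate one recurrence relation into the other by a direct substitution, treating both directions symmetrically. Starting from the independence-polynomial recurrence $I(G, x) = \sum_i a_i(x) \cdot I(G_i, x)$, I would substitute $x \mapsto x^{-1}$ everywhere to get $I(G, x^{-1}) = \sum_i a_i(x^{-1}) \cdot I(G_i, x^{-1})$, then multiply both sides by $x^{n(G)}$. On the left, $x^{n(G)} \cdot I(G, x^{-1}) = \Psi(G, x)$ by Proposition \ref{prop:rel_i_psi}. On the right, I would write $x^{n(G)} = x^{n(G) - n(G_i)} \cdot x^{n(G_i)}$ inside the $i$-th summand, so that $x^{n(G_i)} \cdot I(G_i, x^{-1}) = \Psi(G_i, x)$, again by Proposition \ref{prop:rel_i_psi} applied to $G_i$. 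This yields exactly $\Psi(G, x) = \sum_i a_i(x^{-1}) \, x^{n(G) - n(G_i)} \cdot \Psi(G_i, x)$.

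For the converse direction I would run the same computation backwards: starting from the $\Psi$-recurrence, substitute $x \mapsto x^{-1}$, multiply by $x^{n(G)}$, and use $\Psi(G, x) = x^{n(G)} \cdot I(G, x^{-1})$ (equivalently $I(G, x) = x^{n(G)} \cdot \Psi(G, x^{-1})$) to convert each $\Psi$ back into an $I$. The exponent bookkeeping is the mirror image: the factor $x^{n(G) - n(G_i)}$ picked up from the coefficient, after the $x \mapsto x^{-1}$ substitution becomes $x^{n(G_i) - n(G)}$, which combines with the overall $x^{n(G)}$ and the conversion factors $x^{\pm n(G_i)}$ to leave no net power of $x$ multiplying $I(G_i, x)$, as required. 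Since every step (substitution $x \mapsto x^{-1}$, multiplication by a power of $x$, applying Proposition \ref{prop:rel_i_psi}) is reversible, the two statements are indeed equivalent, and a single chain of equalities read in both directions suffices.

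The only point requiring a little care — and the place I would expect a careless reader to stumble — is the exponent accounting: one must split the global factor $x^{n(G)}$ as $x^{n(G) - n(G_i)} \cdot x^{n(G_i)}$ \emph{inside} the sum, so that the $n(G_i)$-part is consumed by the conversion $I(G_i, x^{-1}) \mapsto \Psi(G_i, x)$ and the $(n(G) - n(G_i))$-part remains as the visible extra factor in the transformed coefficient. There is also a mild subtlety in that $a_i(x)$ and $a_i(x^{-1})$ are Laurent polynomials rather than ordinary polynomials; but since these identities are between polynomials in $x$ that agree on all nonzero values (or, if one prefers, hold as identities of rational functions and both sides are manifestly polynomials), the substitution $x \mapsto x^{-1}$ is legitimate. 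No genuine obstacle arises: the content is entirely carried by Proposition \ref{prop:rel_i_psi}, and this theorem is essentially its ``functorial'' consequence at the level of recurrences.
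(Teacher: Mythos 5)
Your proposal is correct and follows essentially the same route as the paper: substitute $x \mapsto x^{-1}$ in the $I$-recurrence, multiply by $x^{n(G)}$, and apply Proposition \ref{prop:rel_i_psi} to $G$ and to each $G_i$, splitting $x^{n(G)} = x^{n(G)-n(G_i)}\,x^{n(G_i)}$ inside the sum. The paper writes out only one direction of the chain and leaves reversibility implicit, exactly as you observe.
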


\begin{proof}
The statement follows direct from the relations of both graph polynomials given in Propostion \ref{prop:rel_i_psi}:
\begin{align*}
\Psi(G, x) 
&= x^{n(G)} \cdot I(G, x^{-1}) \\
&= x^{n(G)} \cdot \sum_{i}{a_i(x^{-1}) \cdot I(G_i, x^{-1})} \\
&= \sum_{i}{a_i(x^{-1}) \: x^{n(G)} \: (x^{-1})^{n(G_i)} \cdot \Psi(G_i, x)} \\
&= \sum_{i}{a_i(x^{-1}) \: x^{n(G)-n(G_i)} \cdot \Psi(G_i, x)}. \qedhere
\end{align*}
\end{proof}

Thus, for each recurrence relation of the independence polynomial there is a corresponding one for the vertex cover polynomial.

\begin{coro}
Let $G = (V, E)$ be a hypergraph with a vertex $v \in V$ and an edge $e \in E$. The vertex cover polynomial $\Psi(G, x)$ satisfies
\begin{alignat}{2}
& \Psi(G, x)
&&= \begin{cases}
x \cdot \Psi(G_{\vdel v}, x) + \Psi(G_{\vhid v}, x) & \text{if } \{v\} \notin E, \\
x \cdot \Psi (G_{\vdel v}, x) & \text{else,}
\end{cases} \\
& &&= \begin{cases}
\Psi(G_{-e}, x) - \Psi(G_{-e \vshid e}, x) & \text{if } \nexists f \in E \colon f \subseteq e, \\
\Psi(G_{-e}, x) & \text{else.}
\end{cases} \\
& &&= \sumsr{\emptyset \subset B \subseteq e}{- (-x)^{\abs{B}} \cdot \Psi(G_{\vsdel B}, x)}, \\
& &&= \Psi(G_{-e}, x) - \Psi(G_{/e}, x) + x \cdot \Psi(G_{\vsdel e}, x). \label{eq:vc_rec_rel}
\end{alignat}
\end{coro}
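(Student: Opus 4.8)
The plan is to deduce all four identities mechanically from Theorem~\ref{theo:rec_rel_i_psi}, by feeding it the four recurrence relations for the independence polynomial that were already established: the first identity will come from the vertex recurrence of Theorem~\ref{theo:rec_rel_hg_v}, the second from the edge-deletion recurrence of Theorem~\ref{theo:rec_rel_hg_e}, the third from the inclusion--exclusion recurrence of Corollary~\ref{coro:rec_rel_hg_e_1}, and the fourth from the contraction recurrence of Corollary~\ref{coro:rec_rel_hg_e_2}. In each case the only work is to take a relation of the shape $I(G, x) = \sum_i a_i(x) \cdot I(G_i, x)$ and rewrite its coefficients in the form $a_i(x^{-1}) \: x^{n(G) - n(G_i)}$ prescribed by that theorem.

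First I would tabulate how the relevant graph operations change the vertex count: deletion and hiding of a single vertex both drop $n$ by one, edge deletion leaves $n$ unchanged, $G_{\vsdel B}$ drops $n$ by $\abs{B}$ (hence $G_{\vsdel e}$ by $\abs{e}$), $G_{/e}$ drops $n$ by $\abs{e} - 1$, and $G_{-e \vshid e}$ drops $n$ by $\abs{e}$. Then the coefficient transformations are one-line computations; for example, in the fourth identity the coefficients $1, -x^{\abs{e}-1}, x^{\abs{e}-1}$ at $G_{-e}, G_{/e}, G_{\vsdel e}$ become $1, -1, x$, which is exactly \eqref{eq:vc_rec_rel}, and in the third identity the coefficient $(-1)^{\abs{B}+1}$ at $G_{\vsdel B}$ becomes $(-1)^{\abs{B}+1} x^{\abs{B}} = -(-x)^{\abs{B}}$.

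For the first two identities, which are stated with a case distinction, I would apply Theorem~\ref{theo:rec_rel_i_psi} inside each branch separately: in the degenerate branches the right-hand side is a single term with coefficient $1$ (at $G_{\vdel v}$, resp.\ at $G_{-e}$), so it only picks up a factor $x$, resp.\ a factor $1$; in the other branches the extra term has coefficient $x$ at $G_{\vhid v}$, resp.\ $-x^{\abs{e}}$ at $G_{-e \vshid e}$, which transform to $1$, resp.\ $-1$. I do not anticipate a real obstacle here --- the proof is pure bookkeeping --- and the single point that requires care is getting the exponents $n(G) - n(G_i)$ right for the hiding and contraction operations, since those are precisely the operations whose effect on $n$ depends on $\abs{e}$.
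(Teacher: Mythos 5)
Your proposal is correct and is exactly the paper's argument: the paper also derives all four identities by feeding Theorems \ref{theo:rec_rel_hg_v} and \ref{theo:rec_rel_hg_e} and Corollaries \ref{coro:rec_rel_hg_e_1} and \ref{coro:rec_rel_hg_e_2} into Theorem \ref{theo:rec_rel_i_psi}. Your coefficient bookkeeping (e.g.\ $-x^{\abs{e}-1} \mapsto -1$ and $x^{\abs{e}-1} \mapsto x$ for the contraction identity) is accurate throughout.
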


\begin{proof}
The statements follow via Theorem \ref{theo:rec_rel_i_psi} from Theorem \ref{theo:rec_rel_hg_v}, Theorem \ref{theo:rec_rel_hg_e}, Corollary \ref{coro:rec_rel_hg_e_1} and Corollary \ref{coro:rec_rel_hg_e_2}, respectively.
\end{proof}

The recurrence relation corresponding to the standard recurrence relation of the independence polynomial is well-known \cite[Theorem 2.2]{dong2002}. (For a recurrence relation of $(-1)^{n(G)} \cdot \Psi(G, -x)$, see \cite[Equation (9)]{gutman1992}.) Equation \eqref{eq:vc_rec_rel} has already been derived in the case of graphs via other graph polynomials \cite[Corollary 30]{trinks2012}.

Recurrence relations via vertex subsets can be derived using the same approach. Here is an example.

\begin{coro}
Let $G = (V, E)$ be a hypergraph with a vertex subset $U \subseteq V$. The vertex cover polynomial satisfies
\begin{align}
\Psi(G, x) &= \sumsr{\substack{W \subseteq U \\ W \text{ is independent in }G}}{x^{\abs{U \setminus W}} \cdot \Psi(G_{\vshid W \vsdel U}, x)} \\
&= \sumsr{\substack{W \subseteq U \\ V \setminus W \text{ is a vertex cover in }G}}{x^{\abs{U \setminus W}} \cdot \Psi(G_{\vshid W \vsdel U}, x)}.
\end{align}
\end{coro}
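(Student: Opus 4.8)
The plan is to derive this corollary from the recurrence relation of Theorem~\ref{theo:rec_rel_hg_vs} for the independence polynomial by passing through the equivalence of recurrence relations established in Theorem~\ref{theo:rec_rel_i_psi}. Theorem~\ref{theo:rec_rel_hg_vs} states
\[
I(G, x) = \sumsr{\substack{W \subseteq U \\ W \text{ is independent in }G}}{x^{\abs{W}} \cdot I(G_{\vshid W \vsdel U}, x)},
\]
which is exactly of the form $I(G, x) = \sum_i a_i(x) \cdot I(G_i, x)$ required by Theorem~\ref{theo:rec_rel_i_psi}, with the index $i$ ranging over the independent subsets $W$ of $U$, coefficient polynomials $a_W(x) = x^{\abs{W}}$, and hypergraphs $G_W = G_{\vshid W \vsdel U}$. (These hypergraphs are well defined because $W$ independent means no edge of $G$ is contained in $W$, so hiding $W$ creates no empty edge.)

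Applying Theorem~\ref{theo:rec_rel_i_psi}, the corresponding recurrence for the vertex cover polynomial reads
\[
\Psi(G, x) = \sumsr{\substack{W \subseteq U \\ W \text{ is independent in }G}}{a_W(x^{-1}) \, x^{n(G) - n(G_W)} \cdot \Psi(G_W, x)},
\]
so it remains to evaluate the two exponents. Clearly $a_W(x^{-1}) = x^{-\abs{W}}$. For the vertex count, observe that $\vshid W$ removes the $\abs{W}$ vertices of $W$ from the vertex set, and the subsequent $\vsdel U$ removes the remaining vertices of $U$, namely the $\abs{U \setminus W} = \abs{U} - \abs{W}$ vertices of $U \setminus W$; hence $n(G_W) = n(G) - \abs{U}$, i.e.\ $n(G) - n(G_W) = \abs{U}$, independently of $W$. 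Combining, $a_W(x^{-1}) \, x^{n(G) - n(G_W)} = x^{-\abs{W}} \cdot x^{\abs{U}} = x^{\abs{U} - \abs{W}} = x^{\abs{U \setminus W}}$, which yields the first claimed identity.

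The second identity is then immediate: by the proposition characterising independent sets via vertex covers, a subset $W \subseteq V$ is independent in $G$ if and only if $V \setminus W$ is a vertex cover in $G$, so the index sets $\{W \subseteq U \mid W \text{ independent in }G\}$ and $\{W \subseteq U \mid V \setminus W \text{ a vertex cover in }G\}$ coincide. The only point that needs a moment's care is the bookkeeping for $n(G) - n(G_W)$ — specifically that hiding $W$ and then deleting $U$ together remove exactly $\abs{U}$ vertices; everything else is a direct substitution into Theorem~\ref{theo:rec_rel_i_psi}.
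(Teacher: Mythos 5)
Your proof is correct and follows the same route as the paper: the paper simply cites Theorems~\ref{theo:rec_rel_i_psi} and~\ref{theo:rec_rel_hg_vs}, and you have filled in exactly the intended bookkeeping, namely $a_W(x^{-1})\,x^{n(G)-n(G_W)} = x^{-\abs{W}}\,x^{\abs{U}} = x^{\abs{U\setminus W}}$ and the complementation between independent sets and vertex covers for the second sum.
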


\begin{proof}
The statements follow from Theorems \ref{theo:rec_rel_i_psi} and \ref{theo:rec_rel_hg_vs}.
\end{proof}
\section{Discussion}
\label{sec:open_problems}

In this survey, several recurrence relations for the independence polynomial of hypergraphs have been discussed. Therein many different vertex and edge operations have been used. However, this list of recurrence relations is not exhaustive, as one can involve other graph operations to generate new recurrence relations.

A part of the aforementioned recurrence relations requires a certain case distinction to prevent undefined graph operations, namely empty edges, that is, edges not incident to any vertex. Extending the definition of hypergraphs to include empty edges, this distinction can be omitted.

Let $G = (V, E)$ be such an \emph{extended hypergraph} also allowing empty edges, that means each edge $e$ is a (possibly empty) subset of the vertex set. An independent set is still defined as a vertex set not including any edge. Consequently, an extended hypergraph $G$ has no independent set if and only if it has an empty edge. Then the independence polynomial $I(G, x)$ satisfies
\begin{align}
I(G, x)
&= I(G_{\vdel v}, x) + x \cdot I(G_{\vhid v}, x), \\
&= I(G_{-e}, x) - x^{\abs{e}} \cdot I(G_{-e \vshid e}, x),
\end{align}
(without any exceptions).
Furthermore, the restriction in the summation in the equation for vertex subsets $U \subseteq V$ is not necessary:
\begin{align}
I(G, x)
= \sumsr{\substack{W \subseteq U \\ W \text{ is independent in }G}}{x^{\abs{W}} \cdot I(G_{\vshid W \vsdel U}, x)}
= \sumsr{\substack{W \subseteq U}}{x^{\abs{W}} \cdot I(G_{\vshid W \vsdel U}, x)}.
\end{align}
Any consequence for other graph polynomials as a result of this extension would require further studies.

\section*{Acknowledgement}
Many thanks are due to Julian A. Allagan for his suggestions improving the presentation of this paper.

\phantomsection
\addcontentsline{toc}{section}{References}

\printbibliography

\end{document}